\documentclass[12pt,reqno]{amsart}
\usepackage{verbatim,amssymb,amsmath,amscd,latexsym,amsbsy,mathrsfs}
\usepackage[inline]{enumitem}   
\usepackage{amsthm}
\usepackage{mathtools}
\usepackage[all,cmtip]{xy}
\DeclareMathOperator*{\Hom}{Hom}
 
\DeclareMathOperator*{\Aut}{Aut}

\DeclareMathOperator*{\Spec}{Spec}
\DeclareMathOperator*{\ram}{ram}

\DeclareMathOperator*{\Pic}{Pic}
\DeclareMathOperator*{\P0}{Pic^0}

\DeclareMathOperator*{\e}{\acute{e}t}
\DeclareMathOperator*{\id}{Id}
\DeclareMathOperator*{\dv}{div}

\DeclareMathOperator*{\into}{\hookrightarrow}
\newcommand{\chash}{\mathcal{\#}} 
\newcommand{\Z}{\mathbb{Z}} \newcommand{\Aff}{\mathbb{A}} \newcommand{\PP}{\mathbb{P}}
\newcommand{\FF}{\mathbb{F}}
\newcommand{\cO}{\mathcal{O}}
\newcommand{\onto}{\relbar\joinrel\twoheadrightarrow}

\newtheorem{thm}{Theorem}[section]

\numberwithin{equation}{section}

\newtheorem{pro}[thm]{Proposition}
\newtheorem{lm}[thm]{Lemma}
\newtheorem{co}[thm]{Corollary}
\theoremstyle{remark} 
\theoremstyle{definition}
\newtheorem{rmk}[thm]{Remark}

\newtheorem{notation}[thm]{Notation}
\newtheorem{ex}[thm]{Example}

\newtheorem{cnd}[thm]{Condition}

\title{
The minimum genus of Galois covers of curves}
\author{Manish Kumar} \author{Poulami Mandal}
\date{}

\begin{document}
\begin{abstract}
Let $V\longrightarrow X$ be a $G$-Galois connected cover of smooth projective connected curves over an algebraically closed field $k$ of positive characteristic $p$ with the branch locus contained in a finite subset of closed points $S_X$ in $X$, where $G$ is a finite cyclic $p$-group. Let $l$ be a prime number other than $p$. Let $\Gamma$ be an extension of an elementary abelian $l$-group $H$ by $G$. We find $G$-stable submodules of the $l$-torsion of the Picard group of $V$ and its generalisation.
This is used to describe a method for finding 
the minimum genus of $\Gamma$-covers of $X$, \'etale over $X\setminus S_X$, dominating $V$; and also the minimum of the genera of $\Gamma$-covers of $\PP^1$ \'etale over $\Aff^1$.
\end{abstract}
\maketitle

\section{Introduction}
Let $X$ be a smooth connected projective curve over an algebraically closed field $k$ of characteristic $p>0$, and $S_X$ be a finite subset of closed points in $X$. Since the \'etale fundamental group $\pi_1^{\e}(X\setminus S_X)$ is a profinite group, understanding its finite quotients is helpful in understanding its structure. Abhyankar's conjecture on affine curves, proved by Raynaud and Harbater (\cite[Th\'eor\`eme 2.2.1 ]{R}, \cite[Theorem~6.2]{Har} and \cite[Corollary~4.7]{Har2}) states that any finite group is a quotient of 
$\pi_1^{\e}(X\setminus S_X)$ if and only if the maximal prime-to-$p$ quotient of this group has a generating set of cardinality less than $2g_X+r_X-1$, where $g_X$ is the genus of $X$ and $r_X>0$ is the cardinality of $S_X$. However, this does not completely describe the profinite group structure of $\pi_1^{\e}(X\setminus S_X)$. Given two finite quotient groups $\Gamma$ and $G$ of $\pi_1^{\e}(X\setminus S_X)$ where $G$ is a quotient of $\Gamma$, we want to know how they fit in the inverse system of finite quotients of $\pi_1^{\e}(X\setminus S_X)$. In other words, given a $G$-cover $\psi:V\longrightarrow X$ \'etale away from $S_X$, we are interested in $\Gamma$-covers $W\longrightarrow X$ \'etale away from $S_X$ that dominate $V\longrightarrow X$. Note that $S_X$ contains the branch locus of $\psi$, but it may also contain some non-branch points. 


The existence and non-existence of such $\Gamma$-covers have been studied in \cite{Pop}, \cite{HS}, \cite{Kum}. Also see survey papers \cite{PS} and \cite{HOPS} for more details.

A finite group is called quasi-$p$ if it is generated by its Sylow-$p$ subgroups. Let $H$ be the subgroup of $\Gamma$ such that $\Gamma/H\cong G$. When $S_X$ is non-empty and $H$ is quasi-$p$, such $\Gamma$-covers always exist (see \cite[Theorem B]{Pop}, \cite[Corollary~4.6]{Har2}). If $S_X$ is empty and $H$ is a $p$-group, the existence of these $\Gamma$-covers was discussed by Pacheco, Stevenson (\cite[Theorem~1.3]{PaS}) and also Borne (\cite[Theorem~1.1]{Borne}). Hence, we investigate the situation when $H$ is prime-to-$p$ (abelian).

Let $l$ be a prime number different from $p$. Let $G$ be the cyclic group $\Z/p^a\Z$ and $H\cong (\Z/l\Z)^n$, for (fixed) natural numbers $a$ and $n$. Then $\Gamma\cong H\rtimes_\theta G$ (by the Schur-Zassenhaus Theorem) for some $G$-action $\theta:G\to\Aut(H)$.
It was shown in \cite{EPCh} that an $l$-torsion finite abelian group $P_l(V\setminus S_V)$, where $S_V$ is the inverse image of $S_X$ in $V$, can be useful for constructing an $H$-cover $W\longrightarrow V$, which is a $\Gamma$-cover of $X$ and dominates $V$. This group $P_l(V\setminus S_V)$ is a generalisation of the $l$-torsion part of the degree zero Picard group of $V$, $\P0(V)[l]$, and the Galois group $\Aut(V|X)\cong G$ acts on it from the right. Its linearly independent subsets of cardinality $n$ are in bijection with $(\Z/l\Z)^n$-covers of $V$ \'etale over $V\setminus S_V$ (Lemma~\ref{C}). It was shown in \cite{EPCh} that the $\Gamma$-covers of $X$, dominating $V$, for various group actions $\theta$ are in bijection with the $G$-submodules of $P_l(V\setminus S_V)$. Here we study the $G$-module structure of $P_l(V\setminus S_V)$(see Lemma~\ref{subcover_2}, Theorem~\ref{2lem} and Corollary~\ref{dim-ker}). It is not clear how $\P0(V)[l]$ (a direct summand of $P_l(V\setminus S_V)$) decomposes as a direct sum of irreducible $\Z/p^a\Z$-representations. 

Another way to understand the \'etale fundamental group is to examine the ramification groups at the branch points of Galois \'etale covers. One interesting question is to find Galois \'etale covers with prescribed ``small" degree of the ramification divisor. Since the genus of this cover is related to the ramification filtration, finding a Galois cover with the least genus is of interest. In \cite[Theorem~4.6]{MP} and \cite[Corollary~2.2]{B2}, some Galois \'etale covers of $\Aff^1$ with Galois group isomorphic to alternating groups were constructed, and it was shown that each of these covers was of minimum genus. In \cite{G}, for a prime number $l$ coprime to $p$, $(\Z/l\Z)^n\rtimes\Z/p\Z$-Galois covers of $\Aff^1_k$  with minimal genus were described. Note that the group $(\Z/l\Z)^n\rtimes\Z/p\Z$ depends on the action of $\Z/p\Z$ on $(\Z/l\Z)^n$. And in~\cite{G}, the minimal genus is computed among all such Galois covers with all possible actions of $\Z/p\Z$ on $(\Z/l\Z)^n$ (with even $n$ allowed to vary).

\subsection*{Main results}
Given a smooth projective connected curve $X$ of genus $g_X$ over an algebraically closed field $k$ of characteristic $p$, a finite set of closed points $S_X$ in $X$, a group action $\theta:\Z/p^a\Z\longrightarrow \Aut(\Z/l^n\Z)$, and a $(\Z/p^a\Z)$-cover $V\longrightarrow X$ with the branch locus contained in $S_X$, we discuss the minimum genus for an $(\Z/l\Z)^n$-cover $W\longrightarrow V$ such that $W\longrightarrow X$ is a $((\Z/l\Z)^n\rtimes_{\theta}(\Z/p^a\Z))$-Galois cover, \'etale away from $S_X$. More precisely, when $a$, $n$ and $\theta$ are fixed, we give a formula for the minimum genus possible for $W$ when (1) every branch point of $V \longrightarrow X$ is totally ramified or (2) the fibre of $V\longrightarrow X$ over each branch point has exactly $p$ points (see Theorem~\ref{min_gen}). The general case is described below:

Let $d_s$ be the order of $l$ in $(\Z/p^s\Z)^*$ for $1\le s\le a$. We label the $d_s$ dimensional irreducible $(\Z/p^a\Z)$-modules over $\FF_l$ by $M_{s,t}$, for $1\le t\le p^{s-1}(p-1)/d_s$ (see Notation~\ref{basic_not}). Let $a$, $n$ and $\theta$ be fixed. Let there be a $(\Z/p^a\Z)$-module monomorphism from $(\Z/l\Z)^n$ into $P_l(V\setminus S_V)$. Then, by \cite[Theorem 3.5]{EPCh}, there exists at least one $((\Z/l\Z)^n\rtimes_\theta(\Z/p^a\Z))$-cover $W\longrightarrow X$, dominating $V\longrightarrow X$.
Let $H'$ be a maximal $(\Z/p^a\Z)$-submodule of $(\Z/l\Z)^n$ that can be embedded in $\P0(V)[l]$ (a direct summand of $P_l(V\setminus S_V)$). Let the multiplicity of the $(\Z/p^a\Z)$-module $\FF_l$ with trivial $(\Z/p^a\Z)$-action, (resp. $M_{s,t}$) in $H/H'$ be $\alpha_0$ (resp. $\alpha_{s,t}$). Let $r_s$ be the number of points in $S_X$ for which each point has exactly $p^s$ pre-images in $V$.
\begin{thm}\label{min_alg}
A method of finding the genus $\mathfrak{g}$ of a $(\Z/l\Z)^n\rtimes_\theta (\Z/p^a\Z)$-cover of $X$, minimum among the genera of all $(\Z/l\Z)^n\rtimes_\theta (\Z/p^a\Z)$-covers of $X$, \'etale over $X\setminus S_X$, dominating $V\longrightarrow X$, is given by the following integer linear programming problem with bounded variables.

Minimize $$\sum_{s=0}^a p^sy_s$$
subject to : $\sum_{i=s}^a y_i\ge \max_t\{\alpha_{s,t}\},$ for $s>0$, $\sum_{s=0}^ay_s\ge \alpha_0+1$ and $0\le y_s\le r_s$ and $y_s\in\Z$, $0\le s\le a$.

If $c_{\min}$ is the minimum value of $\sum_{s=0}^a p^sy_s$, then the minimum genus is $$\mathfrak{g}=\frac{2+l^n(2g_V-2)+l^{n-1}(l-1)c_{\min}}{2}.$$
\end{thm}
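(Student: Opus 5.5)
We reduce the genus computation to the ramification of the $H$-cover $W\to V$, where $H=(\Z/l\Z)^n$ and the $H$-action is tame. By \cite[Theorem 3.5]{EPCh}, covers $W\to X$ as in the statement correspond to $G$-submodules of $P_l(V\setminus S_V)$ isomorphic to $H$ via $\theta$, with $G=\Z/p^a\Z$. By Lemma~\ref{C}, such a submodule is spanned by $n$ independent classes, each a pair (line bundle $\mathcal L$, divisor $D$ supported on $S_V$) with $\mathcal L^{\otimes l}\cong \cO(D)$. The cover $W\to V$ is tamely ramified exactly over the points of $S_V$ at which some element of the submodule has a divisor component not divisible by $l$. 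Each such point has inertia $\Z/l\Z$ in $H$, so its fibre has $l^{n-1}$ points with ramification index $l$. If $c$ is the number of ramified points of $V$, Riemann--Hurwitz gives
$$2g_W-2=l^n(2g_V-2)+l^{n-1}(l-1)c,$$
which is the stated formula. So the task is to show that the minimum of $c$ over all admissible submodules equals $c_{\min}$.

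\textbf{Decomposing the boundary part.} Using Lemma~\ref{subcover_2}, Theorem~\ref{2lem} and Corollary~\ref{dim-ker}, we write $P_l(V\setminus S_V)\cong \P0(V)[l]\oplus Q$. Here $Q$ is the quotient of degree-zero divisors supported on $S_V$ modulo $l$, possibly modulo principal ones. A point $x\in S_X$ with $p^s$ preimages contributes the permutation module $\FF_l[G/G_x]$, where $|G/G_x|=p^s$. Since $p\ne l$, this module is semisimple and equals $\FF_l\oplus\bigoplus_{1\le s'\le s}\bigoplus_t M_{s',t}$, each summand occurring with multiplicity one. The degree-zero condition removes one global copy of $\FF_l$. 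That accounts for the constraint $\sum_s y_s\ge\alpha_0+1$ rather than $\alpha_0$: to get $\alpha_0$ trivial summands from degree-zero divisors we need at least $\alpha_0+1$ orbits.

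\textbf{Lower bound.} Let $N\cong H$ be an admissible submodule. Its image $N\to Q$ has kernel in $\P0(V)[l]$. That kernel embeds in the maximal part $H'$, so the image contains a copy of $H/H'$. Let $y_s$ be the number of points of $S_X$ with $p^s$ preimages over which $W$ ramifies. The image of $N$ in $Q$ lies in the sum of the permutation modules of these orbits, and $c=\sum_s p^s y_s$. For each $s>0$ and each $t$, the multiplicity of $M_{s,t}$ in that sum is $\sum_{i\ge s}y_i$. Hence $\sum_{i\ge s}y_i\ge \alpha_{s,t}$ for every $t$, the trivial-module count gives the other inequality, and $0\le y_s\le r_s$ holds trivially. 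Therefore $c\ge c_{\min}$.

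\textbf{Achievability, the main obstacle.} Take an optimal $(y_s)$ and choose $y_s$ orbits of each type. By the multiplicity count, the sum of their permutation modules, restricted to degree zero, contains a copy of $H/H'$. Embed $H'$ into $\P0(V)[l]$ and take the direct sum of the two embeddings; this gives an embedding of $H$. The delicate points are the following:
\begin{enumerate*}[label=(\roman*)]
\item every chosen orbit must actually ramify, which we arrange by choosing the components so that their divisors have a nonzero coefficient at each chosen point;
\item the embedding must remain injective in $P_l$ after the identifications coming from principal divisors and from $\Pic$.
\end{enumerate*}
For (i), we work isotypic component by component. If a component needs nonzero coefficients over an orbit of size $p^s$ but has multiplicity zero there, we perturb by adding a copy from $\P0$ or mix components; the $\max_t$ constraints guarantee there is room. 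This is the step I expect to need care, and I would handle it via the explicit structure in Theorem~\ref{2lem}. Granting it, $c=c_{\min}$ is attained, and the genus formula follows.
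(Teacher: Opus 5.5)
Your lower bound is sound, and slightly more careful than the paper's. The paper reduces to submodules with $H'=N\cap\P0(V)[l]$ and $H''=N\cap P_{\ram}(S_V,l)$; you instead project $N$ by $f$, note that ramification depends only on $f(N)$, and use semisimplicity to see that $f(N)\cong N/(N\cap\P0(V)[l])$ contains a copy of $H/H'$. Two small corrections:
\begin{enumerate}
\item There is no ``possibly modulo principal ones.'' By Lemma~\ref{P_ram}, $P_l(V\setminus S_V)=\P0(V)[l]\oplus\imath(P_{\ram}(S_V,l))$ with $f$ onto $P_{\ram}(S_V,l)$. So your worry (ii) is void: a direct sum of an embedding of $H'$ into $\P0(V)[l]$ and one of $H/H'$ into $P_{\ram}(S_V,l)$ is automatically an embedding.
\item The trivial-module count yields $\sum_s y_s\ge\alpha_0+1$ only when at least one orbit ramifies. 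If $H=H'$, the true minimum is $c=0$, realised by an \'etale cover. So the statement, and the paper's proof (which only treats $\alpha_0>0$), should be read with $H\neq H'$.
\end{enumerate}

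The genuine gap is in achievability. You leave (i) unproved, and your suggested repair cannot work: adding elements of $\P0(V)[l]$ does not change the divisor part, so it never creates ramification. The missing observation is that (i) need not be arranged at all. In your constructed submodule, every element of $E\subset P_{\ram}(S_V,l)$ is supported on the chosen orbits. Hence the branch locus of $W\to V$ is a union of chosen orbits, say $y'_s$ of type $s$ with $y'_s\le y_s$, and $c\le\sum_s p^sy_s=c_{\min}$. This submodule is itself admissible, so your lower bound shows $(y'_s)$ is feasible and $c\ge c_{\min}$. Therefore $c=c_{\min}$, and in fact $y'=y$: every chosen orbit ramifies automatically. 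This is exactly how the paper finishes (``the image is exactly $\sqcup_{i=0}^a\Lambda_i$ as $\sum_0^ap^s\lambda_s$ is minimum''). With this step your proof is complete and follows the same construction as the paper.
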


Finally, in the Section~\ref{last_section}, we denote by $(\Z/l\Z)^n \rtimes (\Z/p^a\Z)$ the class of groups consisting of extensions of $(\Z/p^a\Z)$ by $(\Z/l\Z)^n$, where the $(\Z/p^a\Z)$-action $\theta$ on $\Z/l^n\Z$ is allowed to vary. For $a=1$, we find the minimum genus of $(\Z/l\Z)^n \rtimes \Z/p\Z$-covers of $\PP^1$ \'etale over $\Aff^1$ in Proposition~\ref{p-cyclic}. This recovers \cite[Theorem~4.2 and 4.3]{G} (see Remark~\ref{rmk-p-cyclic}). Moreover, Theorem~\ref{p^a}, Corollary~\ref{p2} and Theorem~\ref{p^2-Galois} extend the earlier results to obtain the minimum genus of $(\Z/l\Z)^n \rtimes \Z/p^a\Z$-covers. The minimum genus is computed among all Galois covers with all possible groups $(\Z/l\Z)^n \rtimes \Z/p^a\Z$ as the action of $\Z/p^a\Z$ on $(\Z/l\Z)^n$ varies.

\begin{thm}\label{p^2-Galois}
Let $a=2$ and $n$ be a positive integer. Let the order of $l$ in $(\Z/p\Z)^*$ and $(\Z/p^2\Z)^*$ be $d_1$ and $d_2$ respectively. The minimum genus of all $((\Z/l\Z)^n\rtimes (\Z/p^2\Z))$-Galois covers of $\PP^1$, \'etale over $\Aff^1$, for all positive integer $n$ and all group homomorphisms $\Z/p^2\Z\longrightarrow \Aut((\Z/l\Z)^n)$ is $$1+l^{d_1}\left(\frac{-1-p^2+(p-1)c}{2}\right)$$ where
\[c=
\begin{cases}
p^2+1, &\text{ if } d_1=d_2;\\
15, &\text{ if } d_1<d_2 \text{ and }p=2; \\
2p^2+2, &\text{ if } d_1<d_2 \text{ and }p\neq 2.
\end{cases}
\]
\end{thm}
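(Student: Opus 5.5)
The plan is to reduce the statement to a minimisation over two pieces of data: the upper ramification jumps of the intermediate $\Z/p^2\Z$-cover $V\to\PP^1$, and the $\Z/p^2\Z$-module structure of $\P0(V)[l]$. Then I would optimise each piece, using Theorem~\ref{min_alg} for fixed $V$. There is one point in the $p=2$, $d_1<d_2$ case that I cannot yet reconcile; I flag it at the end.

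\emph{Step 1 (shape of $V$).} Take $X=\PP^1$ and $S_X=\{\infty\}$. Any $\Z/p^2\Z$-cover $V\to\PP^1$ étale over $\Aff^1$ is totally ramified at $\infty$: the quotient by the inertia group is an étale cover of $\PP^1$, hence trivial. Write $u_1<u_2$ for the upper jumps. By Hasse--Arf the different exponent is $(p^2-1)(u_1+1)+(p-1)p(u_2-u_1)$, and Riemann--Hurwitz gives
$$2g_V-2=-1-p^2+(p-1)c,\qquad c:=u_1+pu_2.$$
For a cyclic $p^2$-extension of $k((t^{-1}))$ one has $p\nmid u_1$, $u_2\ge pu_1$, and either $u_2=pu_1$ or $p\nmid u_2$. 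For $p=2$ I would check this last condition again against the explicit Witt-vector description, since the $p=2$ answer depends on it.

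\emph{Step 2 (the target formula).} Since there is a single branch point, in Theorem~\ref{min_alg} we have $r_0=1$ and $r_s=0$ for $s>0$. Hence every $\alpha_{s,t}$ with $s>0$ must vanish, so $H/H'$ is trivial-isotypic. The stated value $1+l^{d_1}(g_V-1)$ contains no ramification term. So the bound should be realised by a $W$ with $n=d_1$ and $W\to V$ étale, meaning $H$ embeds $G$-equivariantly into $\P0(V)[l]$ (using \cite[Theorem 3.5]{EPCh} and Lemma~\ref{C}). I would verify this reading against the precise normalisation in Theorem~\ref{min_alg}.

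The genus is $1+l^n(g_V-1)$ up to ramification terms, and $g_V-1\ge 0$ in the relevant cases, so it suffices to do three things. First, show $n\ge d_1$. If $G$ acted trivially on $H$, then $W\to\PP^1$ would have a $(\Z/l\Z)$-quotient cover of $\PP^1$ tamely ramified only at $\infty$, which is impossible. A nontrivial irreducible $\FF_l[\Z/p^2\Z]$-module has dimension $d_1$ or $d_2$, with $d_2\in\{d_1,pd_1\}$. Second, minimise $g_V$, equivalently $c$, subject to $\P0(V)[l]$ containing an irreducible of dimension $d_1$. Third, compare this with the competing option $n=d_2$ and smaller $c$.

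\emph{Step 3 (the module structure).} This is the main step. Let $V_1=V/(p\Z/p^2\Z)$. The subgroup $p\Z/p^2\Z$ has order prime to $l$. Using pullback and norm, the part of $\P0(V)[l]$ on which it acts trivially is $\P0(V_1)[l]$. Here $V_1$ is a $\Z/p\Z$-cover of conductor $u_1$ with $g_{V_1}=(p-1)(u_1-1)/2$. Moreover $\P0(\PP^1)=0$, so the $G$-fixed part vanishes.

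If $d_1=d_2$, any nonzero $\P0(V)[l]$ works. The minimal data $u_1=1,u_2=p$ give $c=p^2+1$ with $g_V>0$, except for a degenerate check when $p=2$.

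If $d_1<d_2$, the faithful part has dimension a multiple of $d_2$. A $d_1$-dimensional summand must therefore come from $V_1$, forcing $g_{V_1}>0$, i.e.\ $u_1\ge2$ with $p\nmid u_1$. For $p$ odd this gives $u_1=2$, $u_2=2p$ and $c=2p^2+2$. For $p=2$ it gives $u_1=3$, $u_2=6$ and $c=15$. Existence of each minimiser would come from explicit Artin--Schreier--Witt equations, such as $y_0^p-y_0=x^{u_1}$ completed to a length-two Witt vector.

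\emph{Main obstacle.} The hardest point is the comparison with $n=d_2$, $d_2=pd_1$, which can use a cheaper $V$. For $p$ odd I would verify the inequality $l^{pd_1}(g_V^{(c=p^2+1)}-1)\ge l^{d_1}(g_V^{(c=2p^2+2)}-1)$ directly. For $p=2$ I cannot make this comparison go through. With $d_1=1<d_2=2$, the choice $c=5$ gives $g_V=1$, hence $1+l^{2}(g_V-1)=1$, which beats the stated value $1+5l$. This would follow if a genus-one $V$ (the supersingular $j=0$ curve with its order-$4$ automorphism) carried a two-dimensional faithful piece of $E[l]$ and admitted an étale $W$. The obvious resolution is that some $W\to V$ must be ramified; the constraint $\sum y_s\ge\alpha_0+1$ in Theorem~\ref{min_alg} hints at this. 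But if that is so, the ramification contribution appears to be missing from the target formula. Reconciling this with the statement, by checking the exact étaleness conventions and the genus normalisation in Theorem~\ref{min_alg}, is what I would settle first. Until then the $p=2$, $d_1<d_2$ case of the proposal is unverified.
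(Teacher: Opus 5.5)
Your reduction is sound and is essentially the paper's argument written in module language. Since $S_V$ is a single point, $P_{\mathrm{ram}}(S_V,l)=0$, so every admissible $W\to V$ is \'etale and $g_W=1+l^n(g_V-1)$. A $d_1$-dimensional summand of $H$ must lie in $\ker\Phi_p(\sigma)$, which has dimension $2g_{V_1}$ by Lemma~\ref{genus_subcover}. This forces $u_1\ge 2$ (resp.\ $u_1\ge 3$ if $p=2$); otherwise $n\ge d_2$. The paper encodes the same constraints as $\gamma_1d_1\le 2g_1$, $\gamma_2d_2\le 2g_2-2g_1$ and enumerates them in Corollary~\ref{p2}.

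For $d_1=d_2$ your argument is complete. For $d_1<d_2$ and $p$ odd, the comparison you deferred does hold. With $c=p^2+1$ one gets $g_V-1=(p-2)(p^2+1)/2$, and with $c=2p^2+2$ one gets $g_V-1=(2p-3)(p^2+1)/2$. So you need $l^{(p-1)d_1}(p-2)>2p-3$, which is true since $l^{(p-1)d_1}\ge 4$. Every other choice has either $n\ge d_1$ and $c\ge 2p^2+2$, or $n\ge d_2$ and $c\ge p^2+1$, and $g_W$ increases in both.

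For $p=2$ and $l\equiv 3\pmod 4$ (so $d_1=1<d_2=2$), the obstruction you found is real. The statement is false there, and you should not try to rescue it by forcing ramification. No ramification can occur, because $P_l(V\setminus S_V)=\mathrm{Pic}^0(V)[l]$. The constraint $\sum y_s\ge\alpha_0+1$ in Theorem~\ref{min_alg} is derived in its proof only when $\alpha_0>0$, and here $\alpha_0=0$.

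Concretely, the Witt vector $(x,0)$ gives a $\Z/4\Z$-cover $V\to\PP^1$, \'etale over $\Aff^1$, with upper jumps $(1,2)$. Here $V_1=\PP^1_{x_1}$ with $x=x_1^2+x_1$, and $V$ is the elliptic curve $y^2+y=x_1^3+x_1$, so $c=5$ and $g_V=1$. By Lemma~\ref{genus_subcover}, $\ker\Phi_2(\sigma)=0$ and $\ker\Phi_4(\sigma)=\mathrm{Pic}^0(V)[l]$. Hence $\sigma^2$ acts as $-1$, and $\mathrm{Pic}^0(V)[l]\cong\FF_l[x]/(x^2+1)$ is irreducible of dimension $d_2=2$. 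Taking $H=\mathrm{Pic}^0(V)[l]$ in Remark~\ref{cover_rmk} gives an \'etale $W\to V$ (multiplication by $l$ on the elliptic curve). The composite $W\to\PP^1$ is Galois with group $(\Z/l\Z)^2\rtimes\Z/4\Z$, \'etale over $\Aff^1$, and $g_W=1<1+5l$.

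Since $g_V\ge 1$ is forced (if $V\cong\PP^1$ then $P_l(V\setminus S_V)=0$), the true minimum in this case is $1$, attained at $n=d_2$ rather than $n=d_1$. The paper's proof fails at its last sentence (``when $n\ge d_2>d_1$ \ldots\ similarly \ldots\ bigger''). Corollary~\ref{p2} itself, with $n=2$, $R=0$, $S=1$, $j=0$, gives $u_1(0)=1$, $u_2(0)=2$, $c=5$ and genus $1$. So your plan proves the theorem exactly in the cases where it is true. The case you could not close is a counterexample to the statement, not a gap in your argument.
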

Here the positive integer $n$ and the group action $\Z/p^2\Z\longrightarrow \Aut((\Z/l\Z)^n)$ are varying. Hence, the minimum is over a class of groups of the type $(\Z/l\Z)^n\rtimes (\Z/p^2\Z)$. Also note that in view of Lemma \ref{quasi-p} and Abhyankar’s conjecture (\cite{R}), there exists an \'etale $(\Z/l\Z)^n\rtimes (\Z/p^2\Z)$-cover of the affine line iff $d_1$ divides $n$.

We will adopt the following convention.
\begin{enumerate} 
 \item Unless otherwise mentioned, a cover will mean a connected cover. 
 \item A finite subset $B$ of a free $\Z$-module will be called linearly independent if the intersection of submodules generated by any two disjoint subsets of $B$ is zero. 
 \item We say that the covers $W_1\longrightarrow Y$, $\ldots$ ,$W_n\longrightarrow Y$ are mutually linearly disjoint if the fibre product $W_1\times_Y\ldots\times_YW_n$ is an irreducible and reduced scheme. \end{enumerate}

\subsection*{Acknowledgement}
We thank the referee for several useful suggestions, which significantly improved the paper.

\section{Cyclic covers of a curve and formulae of genus}\label{secn2}
Let $Y$ be a smooth connected projective curve of genus $g$ over an algebraically closed field $k$ of characteristic $p>0$, and $S$ be a finite subset of closed points in $Y$, with $|S|=r$. Let $m>1$ be an integer coprime to $p$, $\mathbf{\mu_m}=\Spec \Z[T] / (T^m-1)$. Let $\Pic(Y)$ denote the Picard group of $Y$ 
and $\Z[S]$ denote the Cartier divisors of $Y$ with support in $S$. Note that $\Z[S]$ is a free abelian group with rank $r$. Let $(\Z/m\Z)[S]=\Z[S]\otimes_{\Z}(\Z/m\Z)$ be the free module over $\Z/m\Z$ with basis $S$.
Let us define an $m$-torsion abelian group \begin{equation}
P_m(Y\setminus S):=\frac{\{([L],D)\in \Pic(Y)\oplus\mathbb{Z}[S]\mid L^{\otimes m}\cong\mathcal{O}(-D)\}}{\{(\mathcal{O}(-D),mD)\mid D\in\mathbb{Z}[S]\}}. 
\end{equation}
Then we have the following isomorphism of groups (see \cite[Proposition 3.5]{T}) $$P_m(Y\setminus S)\cong H^1_{\e}(Y\setminus S, \mathbf{\mu_m})\cong\Hom(\pi_1^{\e}(Y\setminus S),\Z/m\Z)\cong(\Z/m\Z)^{ 2g+r-1+b^{(2)}},$$
where $b^{(2)}=1$ if $r=0$, else $b^{(2)}=0$. This gives a bijection between $m$-cyclic \'etale covers of $Y\setminus S$ and the elements $([L],D)\in P_m(Y\setminus S)$ (for simplicity, we also denote the equivalence class of $([L],D)$ by $([L],D)$).

\subsection{Construction of $m$-cyclic covers}
Choose $([L],D)\in P_m(Y\setminus S)$. We may assume $D$ is an effective Cartier divisor with support in $S$, since the multiplicity at each point in $S$ can be chosen from $\{0,1, \ldots, m-1\}$. For any $([L],D)\in P_m(Y\setminus S)$, we have $L^{\otimes m}\otimes\cO(D)=\dv(f)\sim 0$, for a rational function $f\in k(Y)$. The corresponding $m$-cyclic cover (possibly disconnected) is the normalization of $Y$ in $k(Y)[t]/(t^m-f)$. 
If the cover is connected and $D=\sum_{i=1}^r a_i y_i$ for $a_i\in \{0,1, \ldots, m-1\}$ and $y_i\in S$, $1\le i\le r$, then this cover is tamely ramified at $y_i$ if and only if $\gcd(m,a_i)<m$. In this situation, the \textbf{ramification index} is $\frac{m}{\gcd(m,a_i)}$ (see \cite[3.15. Lemma]{EV}). Note that $[L]\mapsto ([L],0)$ gives an injective homomorphism from $\P0(Y)[m]$ to $P_m(Y\setminus S)$ and the corresponding cover will be \'etale over $Y$. Conversely an $m$-cyclic cover of $Y$, \'etale over $Y\setminus S$, corresponds to an element $([L],D)$ in $P_m(Y\setminus S)$ (see the discussion above \cite[Proposition~3.5]{T} 

The following well-known result describes important properties of $(\Z/m\Z)^n$-covers of $Y$, \'etale away from $S$.

\begin{lm}[{{\cite[Proposition 3.3]{EPCh}}}]\label{C}
For the curve $Y$ and the $\Z/m\Z$-module $P_m(Y\setminus S)$ as above, let $W_{\mathfrak{L}}\longrightarrow Y$ be the $m$-cyclic cover corresponding to $\mathfrak{L}\in P_m(Y\setminus S)$. Then $\mathfrak{L}$ has order $m$ in $P_m(Y\setminus S)$ if and only if $W_\mathfrak{L}$ is a connected $m$-cyclic cover. For a finite subset $B$ of $P_m(Y\setminus S)$, $B$ is a $\Z/m\Z$-basis of a free submodule of $P_m(Y\setminus S)$ if and only if the set of covers $\{W_{\mathfrak{L}}\longrightarrow Y:\mathfrak{L}\in B\}$ are mutually linearly disjoint, i.e., the fibre product $\times_{\mathfrak{L}\in B} W_\mathfrak{L}\longrightarrow Y$ is an integral scheme. In this case, the normalisation of this fibre product is a $\langle B\rangle$-Galois cover of $Y$ that dominates each connected component of $W_\zeta\longrightarrow Y$ for any $\zeta\in\langle B\rangle$.
\end{lm}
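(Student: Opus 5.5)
The statement to prove is Lemma~\ref{C}, which is cited from \cite{EPCh}. I would prove it directly from the Kummer-theoretic description of $P_m(Y\setminus S)\cong H^1_{\e}(Y\setminus S,\mathbf{\mu_m})\cong\Hom(\pi_1^{\e}(Y\setminus S),\Z/m\Z)$ given above. Fix once and for all a primitive $m$-th root of unity in $k$; this is possible since $p\nmid m$. Under this identification, $\mathfrak{L}=([L],D)$ corresponds to a homomorphism $\phi_{\mathfrak{L}}:\pi_1^{\e}(Y\setminus S)\to\Z/m\Z$. The cover $W_{\mathfrak{L}}$ is the normalization of $Y$ in $k(Y)[t]/(t^m-f)$. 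Over $Y\setminus S$ it is the $\Z/m\Z$-torsor attached to $\phi_{\mathfrak{L}}$, namely the finite étale cover corresponding to the $\pi_1$-set $\Z/m\Z$ with action through $\phi_{\mathfrak{L}}$.

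\textbf{Connectedness.} Connected components of this torsor correspond to orbits of $\pi_1$ on $\Z/m\Z$, i.e.\ to cosets of $\operatorname{im}\phi_{\mathfrak{L}}$. Hence $W_{\mathfrak{L}}$ is connected if and only if $\phi_{\mathfrak{L}}$ is surjective. This holds if and only if $\phi_{\mathfrak{L}}$ has order $m$ in $\Hom(\pi_1,\Z/m\Z)$, because a homomorphism to a cyclic group of order $m$ has order equal to the size of its image. Equivalently, in field terms, $k(Y)[t]/(t^m-f)$ is a field if and only if $f$ is not a $d$-th power for any $d\mid m$ with $d>1$, and this matches the order of $\mathfrak{L}$. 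Normalization does not change connectedness, since the normalization is the union of the closures of the components over the dense open $Y\setminus S$.

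\textbf{Linear disjointness.} For $B=\{\mathfrak{L}_1,\dots,\mathfrak{L}_n\}$, the fibre product over $Y\setminus S$ is the torsor under $(\Z/m\Z)^n$ given by $\Phi=(\phi_{\mathfrak{L}_1},\dots,\phi_{\mathfrak{L}_n}):\pi_1\to(\Z/m\Z)^n$. Being étale, it is reduced, and it is connected (hence integral) if and only if $\Phi$ is surjective. Over the finitely many points of $S$, reducedness and irreducibility of the full fibre product must be checked. Here I would note that irreducibility is detected generically: the function field of the fibre product is $k(Y)\otimes\cdots\otimes k(Y)[t_i]/(t_i^m-f_i)$. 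Integrality of the scheme in the lemma is meant in this generic sense, and the normalization is unaffected.

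Next comes the algebra step. By duality, $\Phi$ is surjective if and only if no nonzero character of $(\Z/m\Z)^n$ kills its image. The characters of $(\Z/m\Z)^n$ pulled back along $\Phi$ are exactly $\sum c_i\phi_{\mathfrak{L}_i}$ with $c=(c_1,\dots,c_n)\in(\Z/m\Z)^n$. So $\Phi$ is surjective if and only if $\sum c_i\mathfrak{L}_i=0$ forces all $c_i=0$, i.e.\ $B$ is a basis of a free submodule. Here one uses that $\Z/m\Z$ is injective over itself and that $\operatorname{im}\Phi$ is a proper subgroup if and only if some nonzero character of $(\Z/m\Z)^n$ vanishes on it. This freeness condition is equivalent to the paper's convention on linear independence when the $\mathfrak{L}_i$ have order $m$.

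\textbf{Final assertion.} The normalization $W$ of the fibre product is then a connected $(\Z/m\Z)^n=\langle B\rangle^\vee$-Galois cover. By Kummer duality its Galois group is identified with $\Hom(\langle B\rangle,\mu_m)\cong\langle B\rangle$. For any $\zeta=\sum c_i\mathfrak{L}_i$, the function $f_\zeta=\prod f_i^{c_i}$ (up to $m$-th powers) has the root $\prod t_i^{c_i}$ in $k(W)$. So $k(W)$ contains the function field of each component of $W_\zeta$, and $W$ dominates each component.

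The main obstacle I expect is bookkeeping: verifying that the identification $P_m\cong\Hom(\pi_1,\Z/m\Z)$ from \cite{T} is compatible with the explicit construction $t^m=f$. In particular, the class $([L],D)$ determines $f$ only up to $m$-th powers and constants, and the relation subgroup $\{(\cO(-D),mD)\}$ changes $f$ exactly by $m$-th powers. This needs to be checked carefully, but it is standard Kummer theory.
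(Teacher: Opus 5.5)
The paper gives no proof of this lemma. It quotes it from \cite[Proposition~3.3]{EPCh} as a well-known fact, and the only ingredient the paper itself records is the identification $P_m(Y\setminus S)\cong H^1_{\e}(Y\setminus S,\mu_m)\cong\Hom(\pi_1^{\e}(Y\setminus S),\Z/m\Z)$ from \cite{T}. Your argument is built on exactly that identification, and it is correct:
\begin{itemize}
\item Components of the torsor are $\pi_1$-orbits, i.e.\ cosets of $\operatorname{im}\phi_{\mathfrak L}$. A homomorphism to $\Z/m\Z$ has order $|\operatorname{im}\phi_{\mathfrak L}|$, so connectedness is equivalent to order $m$.
\item By duality for groups of exponent dividing $m$, surjectivity of $\Phi$ is the same as injectivity of $c\mapsto\sum c_i\mathfrak L_i$ on $(\Z/m\Z)^n$. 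That is precisely freeness with basis $B$.
\item Since $\mu_m\subset k$, the field $k(W)$ contains all roots of $t^m-f_\zeta$, which gives the domination claim.
\end{itemize}
The compatibility you call bookkeeping amounts to one check: $([L],D)\mapsto[f]$ is a well-defined injective homomorphism $P_m(Y\setminus S)\to k(Y)^*/k(Y)^{*m}$. For injectivity, if $f$ is an $m$-th power then $D\in m\Z[S]$ and $([L],D)$ lies in the relation subgroup. Once this is done you could bypass $\pi_1$ and use Kummer theory over $k(Y)$ directly. The $m^n$-dimensional generic fibre surjects onto $k(Y)(f_1^{1/m},\dots,f_n^{1/m})$, whose degree is $|\langle B\rangle|$. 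So the generic fibre is a field if and only if $|\langle B\rangle|=m^n$.

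The one step you should not leave as written is integrality over the points of $S$. You do not need to declare that integrality is \emph{meant in the generic sense}, because it holds literally:
\begin{itemize}
\item Each $W_{\mathfrak L}\to Y$ is finite, and $\cO_{W_{\mathfrak L}}$ sits inside the $k(Y)$-algebra $k(Y)[t]/(t^m-f)$. Hence it is torsion-free, and so flat, over the Dedekind scheme $Y$.
\item The fibre product is therefore finite and locally free over $Y$. Its structure sheaf injects into the generic fibre $\bigotimes_{k(Y)}k(Y)[t_i]/(t_i^m-f_i)$, which is reduced because $p\nmid m$.
\item Consequently the fibre product is reduced, and all its minimal primes lie over the generic point of $Y$, so no component is concentrated over $S$.
\end{itemize}
So the fibre product is integral exactly when its generic fibre is a field, which is what your $\pi_1$ argument decides. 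With this added, the proof is complete.
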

Note that when $B$ is a linearly independent subset of cardinality $n$, $\langle B\rangle\cong(\Z/m\Z)^n$.

\subsection{Equation of Artin-Schreier-Witt curves of degree $p^2$}\label{ASW_eqn}
Let $V\longrightarrow X$ be a Galois cover of degree $p^2$. Then it is given by a reduced Witt vector $(f_1, f_2)$. Then $k(V)=k(X)[x_1,x_2]/I$. When $p$ is odd, the ideal $I$ is generated by $$x_1^p-x_1-f_1\text{ and}$$ 
$$x_2^p-x_2+x_1^{p(p-1)+1}-\frac{\binom{p}{2}}{p}x_1^{p(p-2)+2}+\frac{\binom{p}{3}}{p}x_1^{p(p-3)+3}-\ldots-(-1)^{p-1}x_1^{p+(p-1)}-f_2.$$
When $p=2$, $I=\langle x_1^2-x_1-f_1, x_2^2-x_2-x_1^2+x_1^3-f_2\rangle$ (see \cite[Chapters~8.10~and~8.11]{Jac} or \cite{Wt}).

\subsection{Genus formulas in various cases}
Let us recall the Riemann-Hurwitz formula and Hilbert’s different formula (\cite[Theorem~11.72]{HK}) for calculating the genus of a cover in various cases:
\subsubsection{Genus of tamely ramified covers}\label{genus-tame}
Let $W\longrightarrow V$ be an $H$-Galois cover, where the order of $H$ is coprime to $p$. Let the genera of $W$ and $V$ be $g_W$ and $g_V$ respectively. Since each ramified point in $W$ is tamely ramified, $2g_W-2=(2g_V-2)|H|+\sum_{w\in W} (e_w-1)$, where $e_w$ is the ramification index of $w$ over its image in $V$.

\subsubsection{Genus of Artin-Schreier covers}\label{genus-AS}
Let $X_2\longrightarrow X_1$ be a $\Z/p\Z$-Galois cover given by $z^p-z=F$ for a rational function $F$ in $k(X_1)$ such that $F\neq f^p-f$ for any $f\in k(X_1)$. Then $k(X_2)=k(X_1)[z]/\langle z^p-z-F\rangle$. Let us assume that all the poles of $F$ are of order coprime to $p$. Let $g_1$ and $g_2$ be the genera of $X_1$ and $X_2$, respectively. For any ramified point $y\in X_2$, let $\mathfrak{n}_y$ denote the order of the pole of $F$ at the image of $y$ in $X_1$. By the assumption, $\gcd(\mathfrak{n}_y,p)=1$. Note that the ramified points in $X_2$ are totally ramified and at each such $y$, the $j$-th lower ramification group is $\Z/p\Z$ for $0\le j\le \mathfrak{n}_y$ and others are $\{0\}$. Then $$g_2=\frac{2+p(2g_1-2)+(p-1)\sum_{y\in X_2}(\mathfrak{n}_y+1)}{2}.$$ 

\subsubsection{Genus of Artin-Schreier-Witt covers using upper jumps}\label{genus-u}
Let $\psi:V\longrightarrow \PP^1$ be a $p^a$-cyclic cover, \'etale over $\Aff^1$, and $X_i\longrightarrow \PP^1$ be the $p^i$-cyclic quotient cover of $\psi$, with genus $g_i$, $0\le i\le a$. Here $X_a=V$, $g_0=0$, and $g_a=g_V$.
Then by \cite[Theorem~1]{Gar} $V$ is given by a reduced Witt vector of length $a$ consisting of polynomials (poles only at $\infty$) with degrees $\mathfrak{d}_1, \ldots, \mathfrak{d}_a$ which are coprime-to-$p$ or zero. The upper jumps (i.e. breaks in the upper numbering ramification filtration) above $\infty$ are given by:
$$u_1:=\mathfrak{d}_1,u_2:=\max\{\mathfrak{d}_1p, \mathfrak{d}_2\}, u_3:=\max\{\mathfrak{d}_1p^2, \mathfrak{d}_2p,\mathfrak{d}_3\}, \ldots, u_a:=\max\{\mathfrak{d}_ip^{a-i}: 1\le i \le a\}.$$

It follows immediately that $(u_1, \ldots ,u_a)$ satisfies the following conditions:
\begin{cnd}\label{uJump}
\begin{enumerate*}
\item $u_i\ge 0$;
\item $u_1\ge 1$ and $\gcd(u_1,p)=1$;
\item $u_i\ge pu_{i-1}$, and $u_i=pu_{i-1}$ or $\gcd(u_i,p)=1$ for $i>1$.
\end{enumerate*}
\end{cnd}
Conversely, it follows from Artin-Schreier-Witt theory (\cite[Theorem~8.31]{Jac}) that given any $(u_1, \ldots ,u_a)$ satisfying the above conditions, there are $p^a$-cyclic covers of $\PP^1$ \'etale over $\Aff^1$ whose $i$-th upper jump at $\infty$ is $u_i$, $1\le i\le a$.

The genus of $X_i$ (\cite[Lemma~1]{PR-wild}) is given by 
\begin{align}
g_i &= 1+p^i(0-1)+\frac{p^i\left[p^i-1+(p-1)\sum_{t=1}^ip^{t-1}u_t\right]}{2p^i}\\
&=\frac{1-p^i+(p-1)\sum_{t=1}^ip^{t-1}u_t}{2}.\label{G}
\end{align}

Clearly, when $G\cong \Z/p\Z$, the genus of $X_1$ is $g_1=(p-1)(\mathfrak{d}_1-1)/2$.

\section{$G$-stable decomposition of $P_l(V\setminus S_V)$}

 \begin{notation}\label{basic_not}
 We fix the following notation.
 \begin{itemize}
\item Let $l$ be a prime number other than $p$.
\item Let $a\ge 1$ be an integer and $G$ be a finite cyclic $p$-group of order $p^a$ generated by $\sigma$. 
\item For $1\le s\le a$, $d_s$ denotes the order of $l$ in $(\mathbb{Z}/p^s\mathbb{Z})^*$, the set of units in $\Z/p^s\Z$.
 \item Let $\Phi_{p^s}(x)$ denote the $p^s$-th cyclotomic polynomial, i.e., the minimal polynomial of a primitive $p^s$-th root of unity over $\mathbb{Q}$. Note that $\Phi_{p^s}(x)\in\Z[x]$ and its image in $\FF_l[x]$ factors into $p^{s-1}(p-1)/d_s$ irreducible factors (\cite{LN}, Theorem 2.47).
We label the irreducible factors of $\Phi_{p^s}(x)$ in  $\FF_l[x]$ by $P_{s,t}(x)$, for $1\le t\le p^{s-1}(p-1)/d_s$. Each of these factors has degree $d_s$.
\item Note that $$\Phi_{p^a}(x)=\frac{(x^{p^{a-1}})^p-1}{x^{p^{a-1}}-1}=\Phi_p(x^{p^{a-1}}).$$
 \item The  irreducible $G$-modules over $\FF_l$ are denoted by
 $$M_{s,t}:=\FF_l[x]/P_{s,t}(x),\text{ for }1\le t\le p^{s-1}(p-1)/d_s,~\ 1\le s\le a.$$ Here $\sigma$ acts by multiplication by $x$.
 \item Over $\FF_l$, we have $(x^{p^s}-1)/(x-1)=\prod_{\eta=1}^s \prod_{t=1}^{\phi(p^\eta)/d_\eta}P_{\eta, t}(x)$. Here, the factors are all distinct and irreducible.
 \item Let $H=(\Z/l\Z)^n$ for some $n\ge 1$.
\end{itemize}
\end{notation}

 Note that $\dim(M_{s,t})=\deg(P_{s,t}(x))=d_s$ (by \cite[Lemma 5.1]{EPCh}). Any irreducible non-trivial $G$-representation over $\FF_l$ is isomorphic to $\FF_l[x]/(P_{s,t}(x))$ for some $s$ and $t$. We will use these notations and hypotheses for the rest of this article.
 
Let $\psi:V\longrightarrow X$ be a $G$-Galois cover of smooth connected projective $k$-curves \'etale away from $S_X$, a nonempty finite subset of closed points of $X$ and $S_V=\psi^{-1}(S_X)$. Let $r_X=|S_X|$, $r_V=|S_V|$ and $g_X$, $g_V$ be the genera of $X$, $V$ respectively. Note that $P_l(V\setminus S_V)$ is a (right) $G$-module via the following action: 
 $$ ([L],D)\cdot\sigma = ([\sigma^*L],\sigma^* D), ~\ \sigma\in G.$$ 
It was shown in \cite[Theorem~3.5]{EPCh} that there is a bijective correspondence between $G$-submodules of $P_l(V\setminus S_V)$ isomorphic to $H$ and $H$-covers of $W \longrightarrow V$, \'etale away from $S_V$ such that $W\longrightarrow X$ is a $\Gamma$-Galois cover for some extension $\Gamma$ of $G$ by $H$.
 
\begin{rmk}\label{cover_rmk} \textbf{\textit{Construction of $\Gamma$-covers}:} Since $\Gamma$ is an extension of $G$ by $H$, to construct a connected $\Gamma$-Galois cover of $X$ \'etale away from $S_X$ and dominating $V$, first we need to find an $H$-cover $W$ of $V$, \'etale over $V\setminus S_V$. By Lemma~\ref{C}, it is enough to find $B\subset P_l(V\setminus S_V)$ such that $\chash B=n$ and $\langle B\rangle\cong H$. The composition  $W\longrightarrow V\longrightarrow X$ is a $\Gamma$-Galois cover for some extension $\Gamma$ of $G$ by $H$ if and only if $\langle B\rangle$ is a $G$-stable submodule of $P_l(V\setminus S_V)$ (\cite[Theorem~3.5]{EPCh}).  
\end{rmk} 
For the $G$-Galois cover $\psi: V\longrightarrow X$, the following holds.

\begin{lm}\label{subcover_2}
Let $K$ be a subgroup of $G$ and $Y$ be the normalisation of $V/K$. Then $\P0(V)[l]^{K}=\P0(Y)[l]$.
\end{lm}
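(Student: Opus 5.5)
We identify $\P0(Y)[l]$ with its image under pullback along the quotient map $\varphi\colon V\longrightarrow Y$. The statement then says that $\varphi^*\colon \P0(Y)[l]\longrightarrow \P0(V)[l]$ is injective and that its image is exactly the $K$-invariants. The proof will rest on the norm map $N_\varphi\colon \Pic(V)\longrightarrow \Pic(Y)$ together with the fact that $|K|=p^b$ is prime to $l$.

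We will not use Hochschild--Serre for $H^1(-,\mu_l)$ along $V\longrightarrow Y$. That argument would be cleaner, but it needs $V\longrightarrow Y$ to be étale, and here it may be ramified at points of $S_V$. The norm argument works for any finite flat (possibly ramified) Galois cover of smooth curves.

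\textbf{Step 1: standard facts about the norm.} For the finite flat $K$-Galois morphism $\varphi$ of degree $p^b$, where $b$ is defined by $|K|=p^b$, we use three facts.
\begin{enumerate}
\item[(i)] $N_\varphi(\varphi^*M)\cong M^{\otimes p^b}$ for every $M\in\Pic(Y)$.
\item[(ii)] $\varphi^*N_\varphi(L)\cong\bigotimes_{\tau\in K}\tau^*L$ for every $L\in \Pic(V)$.
\item[(iii)] $N_\varphi$ preserves degree zero and $l$-torsion.
\end{enumerate}
Fact (ii) is most easily checked on divisors. For a point $v\in V$ we have $\varphi^*\varphi_*(v)=\sum_{\tau\in K}\tau(v)$, because the ramification index times the size of the orbit equals $p^b$ and each orbit point appears with multiplicity $e_v$. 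Linearity then gives the formula for $\cO(D)$ with $D$ a divisor. Facts (i) and (iii) follow from $\varphi_*\varphi^*=p^b$ on divisors and from the fact that $\varphi_*$ preserves degree.

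\textbf{Step 2: injectivity.} Suppose $M\in\P0(Y)[l]$ satisfies $\varphi^*M\cong\cO_V$. By (i), $M^{\otimes p^b}\cong\cO_Y$. Since also $M^{\otimes l}\cong\cO_Y$ and $\gcd(p^b,l)=1$, it follows that $M\cong\cO_Y$.

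The image of $\varphi^*$ lies in the invariants because $\tau^*\varphi^*M=(\varphi\circ\tau)^*M=\varphi^*M$ for every $\tau\in K$.

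\textbf{Step 3: surjectivity onto the invariants.} Take $[L]\in\P0(V)[l]^{K}$, and choose an integer $c$ with $cp^b\equiv 1 \pmod l$. Put $M=N_\varphi(L)^{\otimes c}$. By (iii), $M\in\P0(Y)[l]$. By (ii) and the $K$-invariance of $[L]$,
$$\varphi^*M\cong\Big(\bigotimes_{\tau\in K}\tau^*L\Big)^{\otimes c}\cong L^{\otimes cp^b}\cong L,$$
where the last isomorphism uses $L^{\otimes l}\cong\cO_V$.

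The only step needing genuine care is formula (ii) when $\varphi$ is ramified. It is handled by the multiplicity count on divisors in Step 1, using $e_v\cdot|K\cdot v|=p^b$. The rest is routine arithmetic with $\gcd(p^b,l)=1$.
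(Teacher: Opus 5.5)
Your proof is correct, and it takes a different route from the paper's.

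\textbf{The paper's route.} It takes the identification $h^*(\mathrm{Pic}^0(Y)[l])\subset\mathrm{Pic}^0(V)[l]^K$, including the injectivity of $h^*$, from \cite[Proposition~3.9]{EPCh}. It proves the reverse inclusion using the dictionary between $l$-torsion classes and $\Z/l\Z$-covers. A $K$-fixed class $\mathcal{L}$ gives a cover $W_{\mathcal{L}}\to V$ that is Galois over $Y$ with group $H\rtimes K$; the paper writes $X$ at this point, but $Y$ is what is meant. Schur--Zassenhaus and the trivial action turn this into $H\times K$, so $W_{\mathcal{L}}/K\to Y$ is an $H$-cover from which $\mathcal{L}$ descends.

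\textbf{Your route.} You use the transfer identities instead: $N_\varphi\circ\varphi^*$ is the $p^b$-th power map and $\varphi^*\circ N_\varphi=\bigotimes_{\tau\in K}\tau^*$. Injectivity and surjectivity onto the invariants then both reduce to $\gcd(p^b,l)=1$.

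\textbf{What each buys.} Your argument is self-contained and purely divisor-theoretic. It needs no group-extension theory. It also avoids a verification the paper leaves implicit: that $W_{\mathcal{L}}/K\to Y$ is \'etale, which holds because its ramification indices divide both $l$ and a power of $p$. It applies verbatim to any finite Galois cover of curves whose degree is prime to $l$, so it also covers the generalization stated in the remark after the lemma. The paper's argument stays within the covering-space language used throughout the article. It also gives a conceptual reason for the lemma: a $K$-invariant $l$-cyclic cover yields a direct-product extension, and so it descends to $Y$.
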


\begin{proof}
Since $K\trianglelefteq G$, the cover $\psi$ factors through $h:V\stackrel{K}{\longrightarrow}Y$ and $f:Y\stackrel{G/K}{\longrightarrow} X$. By \cite[Proposition~3.9]{EPCh}, we identify $\P0(Y)[l]$ with $h^*(\P0(Y)[l])\subset\P0(V)[l]^K$. 

Let $\mathcal{L}\in \P0(V)[l]^K$. Then $H=\langle \mathcal{L}\rangle$ is $K$-stable (in fact, $\mathcal{L}\cdot g=\mathcal{L}$ $\forall g\in K$). If $W_\mathcal{L}\longrightarrow V$ is the corresponding $l$-cyclic cover with Galois group $H$ then the composition $W_\mathcal{L}\longrightarrow V\longrightarrow X$ is also Galois (by \cite[Proposition 2.1]{EPCh}). Since $\gcd(l,|K|)=1$, by Schur-Zassenhaus Theorem, we have $\Aut(W_\mathcal{L}|X)=H\rtimes K$. But the $K$-action on $H$ is trivial. Hence $\Aut(W_\mathcal{L}|X)=H\times K$. Then $W_\mathcal{L}/K\stackrel{H}{\longrightarrow}Y$ is a Galois cover. In other words, $\mathcal{L}\in h^*(\P0(Y)[l])=\P0(Y)[l]$.
\end{proof}

\begin{rmk}
From the proof, it is clear that Lemma~\ref{subcover_2} holds more generally. The group $G$ can be an arbitrary finite group instead of $\Z/p^a\Z$, $K$ a normal subgroup of $G$ and $l$ an arbitrary positive integer coprime to $p|K|$.
\end{rmk}

In \cite[Theorem~5.2]{EPCh} we find the values of $n$ for which $\Gamma$-covers of $X$ (where the $G$-action $\theta$ varies), \'etale over $X\setminus S_X$, can be constructed. Let $n_0$ (resp. $n_s$, $1\le s\le a$) be the dimension of $P_l(V\setminus S_V)^G$ (resp. $ \ker(\Phi_{p^s}(\sigma))\subset P_l(V\setminus S_V)$) over $\FF_l$.

\begin{pro}[{{\cite[Theorem~5.2]{EPCh}}}]\label{n_char}
The integer $n$ can be expressed as $n=\gamma_0+\Sigma_{s=1}^a\gamma_sd_s$ for non-negative integers $\gamma_0\le n_0$, $\gamma_s\le n_s/d_s,\forall s\le a$ if and only if there is an $H\rtimes_\theta G$-cover of $X$, \'etale over $X\setminus S_X$, dominating $V$ for some group homomorphism $\theta:G\longrightarrow \Aut(H)$.
\end{pro}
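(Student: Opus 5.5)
The plan is to read Proposition~\ref{n_char} as a statement about $G$-submodules of $P_l(V\setminus S_V)$ and to prove both directions through the correspondence in Remark~\ref{cover_rmk} (\cite[Theorem~3.5]{EPCh}). Since $\gcd(l,p)=1$, Maschke's theorem makes $P_l(V\setminus S_V)$ a semisimple $\FF_l[G]$-module. We have $\FF_l[G]\cong\FF_l[x]/(x^{p^a}-1)$, and by Notation~\ref{basic_not} the polynomial $x^{p^a}-1$ factors over $\FF_l$ as $(x-1)\prod_{s,t}P_{s,t}(x)$ with distinct irreducible factors. So $P:=P_l(V\setminus S_V)$ splits as an internal direct sum of isotypic pieces:
\[
P=\ker(\sigma-1)\oplus\bigoplus_{s=1}^a\ker(\Phi_{p^s}(\sigma)),\qquad
\ker(\Phi_{p^s}(\sigma))=\bigoplus_t\ker(P_{s,t}(\sigma)).
\]
Each summand $\ker(\Phi_{p^s}(\sigma))$ is a direct sum of copies of the $M_{s,t}$, each of dimension $d_s$, so $d_s\mid n_s$. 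Also $\ker(\sigma-1)=P^G$ has dimension $n_0$.

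For the forward direction, suppose $n=\gamma_0+\sum_s\gamma_sd_s$ with the stated bounds. I would choose a $\gamma_0$-dimensional subspace of $P^G$; this is automatically $G$-stable. For each $s$, I would choose $\gamma_s$ irreducible summands of $\ker(\Phi_{p^s}(\sigma))$, which is possible because that kernel contains $n_s/d_s\ge\gamma_s$ irreducible summands. The sum $N$ of these pieces is direct because the isotypic pieces are independent, so $N$ is a $G$-stable submodule of dimension $n$, hence $N\cong(\Z/l\Z)^n=H$ as groups. An $\FF_l$-basis $B$ of $N$ is linearly independent, so Lemma~\ref{C} gives a connected $H$-cover $W\to V$, étale over $V\setminus S_V$. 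By Remark~\ref{cover_rmk}, $W\to X$ is then $\Gamma$-Galois for some extension $\Gamma$ of $G$ by $H$. Schur--Zassenhaus gives $\Gamma\cong H\rtimes_\theta G$, where $\theta$ is the action of $G$ on $N$ transported to $H$. Finally, $W\to X$ is étale over $X\setminus S_X$ because $V\to X$ is étale there and $W\to V$ is étale over $S_V$'s complement.

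For the converse, an $H\rtimes_\theta G$-cover $W\to X$ dominating $V$, étale over $X\setminus S_X$, gives an $H$-subcover $W\to V$ étale over $V\setminus S_V$. By Remark~\ref{cover_rmk} and \cite[Theorem~3.5]{EPCh}, it corresponds to a $G$-stable submodule $N\subset P$ with $N\cong H$. Decompose $N$ isotypically, so that $N=N^G\oplus\bigoplus_s(N\cap\ker\Phi_{p^s}(\sigma))$; this holds because the projections onto isotypic components are polynomials in $\sigma$ and therefore preserve $N$. Then set $\gamma_0=\dim N^G\le n_0$ and $\gamma_s=\dim(N\cap\ker\Phi_{p^s}(\sigma))/d_s\le n_s/d_s$. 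These are integers because each such piece is a sum of modules of dimension $d_s$, and summing dimensions gives $n=\gamma_0+\sum_s\gamma_sd_s$.

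The main obstacle is the precise identification between subcovers and submodules. One has to check that the submodule attached to $W\to V$ is isomorphic to $H$ as an abstract group, which uses Lemma~\ref{C} together with the requirement that $W$ be connected. The rest is linear algebra over the semisimple algebra $\FF_l[G]$, with the cyclotomic factorisation of Notation~\ref{basic_not} supplying both the isotypic decomposition and the dimensions $d_s$.
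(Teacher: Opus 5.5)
Your proposal is correct and follows essentially the argument behind this result. The paper does not reprove it but imports it from \cite[Theorem~5.2]{EPCh}, which, as you do, combines the correspondence of \cite[Theorem~3.5]{EPCh} (Remark~\ref{cover_rmk}) between such covers and $G$-submodules of $P_l(V\setminus S_V)$ isomorphic to $H$ with the decomposition of the semisimple $\FF_l[G]$-module $P_l(V\setminus S_V)$ into $\ker(\sigma-1)$ and the pieces $\ker(\Phi_{p^s}(\sigma))$, whose irreducible constituents all have dimension $d_s$. One harmless imprecision: under the Kummer pairing, the conjugation action of $G$ on $\mathrm{Gal}(W/V)$ is the contragredient of the action on $N$, not literally the transported one, which does not matter because the statement only asks for some $\theta$.
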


Now we further examine the irreducible decomposition of $P_l(V\setminus S_V)$ to find the values of $n_0$ and $n_t$.

\begin{lm}\label{1lem}
For each $x\in S_X$, $\psi^{-1}(\{x\})$ is a $G$-subset of $S_V$. Moreover, the cardinality of $\psi^{-1}(\{x\})$ divides $p^a$ and the elements in $\psi^{-1}(\{x\})$ can be rearranged as $v, \sigma^{-1}v, \sigma^{-2}v, \ldots$.
\end{lm}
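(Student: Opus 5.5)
The plan is to read the statement off the fact that $\psi$ is $G$-Galois, so $G$ acts on the fibres of $\psi$ and acts transitively on each of them. First I check that $\psi^{-1}(\{x\})$ is a $G$-subset of $S_V$. For $v\in\psi^{-1}(\{x\})$ and $g\in G=\Aut(V|X)$ we have $\psi\circ g=\psi$. Hence $\psi(g v)=\psi(v)=x$, so the fibre is stable under $G$. It lies in $S_V$ because $S_V=\psi^{-1}(S_X)$ by definition.

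Next comes the cardinality. For a finite Galois cover of smooth projective curves, $\Aut(V|X)$ acts transitively on every fibre. This is the standard fact that the Galois group acts transitively on the primes of $k(V)$ above a given prime of $k(X)$. So $\psi^{-1}(\{x\})$ is a single $G$-orbit. By orbit--stabiliser, $\#\psi^{-1}(\{x\})=[G:G_v]$ for any $v$ in the fibre, where $G_v$ is the decomposition (here inertia) group. This index divides $|G|=p^a$, so the fibre has $p^{s}$ elements for some $0\le s\le a$.

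Finally I arrange the points. Since $G=\langle\sigma\rangle$ is cyclic, so is $\langle\sigma^{-1}\rangle=G$. The orbit of $v$ is therefore $\{\sigma^{-i}v: i\ge 0\}$. The stabiliser $G_v$ is the unique subgroup of order $p^{a-s}$, namely $\langle\sigma^{p^s}\rangle$. It follows that $v,\sigma^{-1}v,\ldots,\sigma^{-(p^s-1)}v$ are pairwise distinct, and $\sigma^{-p^s}v=v$. This gives the listing $v,\sigma^{-1}v,\sigma^{-2}v,\ldots$ of the whole fibre, and the list is cyclically permuted by $\sigma^{-1}$. The only non-formal input is the transitivity of the Galois action on fibres, which I will cite as standard rather than treat as an obstacle. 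The rest is elementary group theory of the cyclic group $G$.
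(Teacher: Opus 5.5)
Your proposal is correct and takes the same approach as the paper. The paper's proof simply cites that $\psi$ is $G$-Galois and that $G$ acts transitively on each fibre. You spell out the details the paper leaves implicit: the orbit--stabiliser count and the identification of the stabiliser as $\langle\sigma^{p^s}\rangle$, which makes $v,\sigma^{-1}v,\ldots,\sigma^{-(p^s-1)}v$ pairwise distinct.
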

\begin{proof}
It follows from the facts that $\psi:V\longrightarrow X$ is a $G$-Galois cover and that $G$ acts transitively on each fibre.
\end{proof}

\begin{notation}\label{Pram}
Let $P_{\ram}(S_V,l)$ denote the $\Z/l\Z$-module 
\begin{equation}\label{eqn_Pram}
P_{\ram}(S_V,l):=\{\Sigma_{v\in S_V} a_vv \mid a_v\in\mathbb{Z}/l\mathbb{Z}, \Sigma_{v\in S_V} a_v =0\}.
\end{equation}
 Since $S_V$ is a $G$-set, $P_{\ram}(S_V,l)$ is in fact a $G$-submodule of $\mathbb{Z}/l\mathbb{Z}[S_V]$, the $\Z/l\Z$-linear combinations of elements in $S_V$. Similarly, $G$ acts trivially on $\Z/l\Z[S_X]$. The $G$-action is also trivial on the submodule $$P_{\ram}(S_X,l):=\{\Sigma_{x\in S_X} a_xx\mid a_x\in\Z/l\Z, \Sigma_{x\in S_X} a_x =0\}.$$
Note that $P_{\ram}(S_X,l)$ can be identified with the $G$-submodule of $P_{\ram}(S_V, l)$ consisting of 
$$\left\{\Sigma_{x\in S_X}a_x\left(\Sigma_{v\in\psi^{-1}(x)}v\right)\mid a_x\in\Z/l\Z, ~\ \Sigma_{x\in S_X} a_x =0\right\}=(P_{\ram}(S_V, l))^G.$$
\end{notation}

\begin{rmk}
The $G$ action on $P_{\ram}(S_V,l)$ is given by $$(\Sigma_{v\in S_V} a_vv)\cdot\sigma=\Sigma_{v\in S_V} a_v(\sigma^{-1}v).$$
\end{rmk}
\begin{lm}\label{P_ram}
There is an isomorphism $P_l(V\setminus S_V)\cong \P0 (V)[l]\oplus P_{\ram}(S_V,l)$ of $G$-modules.
\end{lm}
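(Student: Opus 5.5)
We will realise $P_l(V\setminus S_V)$ as an extension of $P_{\ram}(S_V,l)$ by $\P0(V)[l]$ and then split this extension $G$-equivariantly, using that $|G|=p^a$ is prime to $l$.

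The first step is to define the $G$-equivariant map
$$\pi:P_l(V\setminus S_V)\longrightarrow \Z/l\Z[S_V],\qquad ([L],D)\mapsto D \bmod l.$$
This is well defined because the relations $(\cO(-D),lD)$ are sent to $0$. It is $G$-equivariant because $\sigma^*$ acts on divisors supported on $S_V$ by $\sigma^*(v)=\sigma^{-1}v$, which agrees with the action on $P_{\ram}(S_V,l)$ in the Remark preceding the lemma.

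Next we identify the image of $\pi$ with $P_{\ram}(S_V,l)$. If $L^{\otimes l}\cong \cO(-D)$, then $l\deg L=-\deg D$, so $\deg D\equiv 0 \pmod l$ and the image lies in $P_{\ram}(S_V,l)$. For surjectivity, take an effective $D$ supported on $S_V$ with $l\mid \deg D$. Since $\Pic^0(V)$ is divisible ($k$ is algebraically closed), $\cO(-D)\otimes\cO(-\deg(D)/l\cdot v_0)^{\otimes -l}$ has an $l$-th root, so some $L$ satisfies $L^{\otimes l}\cong\cO(-D)$.

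Then we compute the kernel of $\pi$. An element $([L],D)$ lies in the kernel when $D=lD'$. Subtracting the relation $(\cO(-D'),lD')$ replaces it by $([L\otimes\cO(D')],0)$, where $(L\otimes \cO(D'))^{\otimes l}\cong\cO$. Hence the kernel is the image of the injection $\P0(V)[l]\to P_l(V\setminus S_V)$, $[L]\mapsto([L],0)$, noted in Section~\ref{secn2}; this image is $G$-stable.

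This gives a short exact sequence of $\FF_l[G]$-modules
$$0\to \P0(V)[l]\to P_l(V\setminus S_V)\to P_{\ram}(S_V,l)\to 0.$$
Since $\gcd(l,p^a)=1$, Maschke's theorem says $\FF_l[G]$ is semisimple, so the sequence splits $G$-equivariantly. Concretely, a $G$-equivariant section is obtained by averaging any linear section $s$ over $G$, i.e. taking $\frac{1}{p^a}\sum_{g} g^{-1}sg$. This yields the claimed isomorphism.

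The main obstacle is the bookkeeping in the middle two steps: checking that $\pi$ is well defined and equivariant on equivalence classes, and carefully verifying both the surjectivity onto the degree-zero part and the kernel identification. Once the exact sequence is in place, the splitting is formal.
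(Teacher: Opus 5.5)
Your proof is correct and is essentially the paper's argument: the paper uses the exact sequence $0\to\P0(V)[l]\to P_l(V\setminus S_V)\xrightarrow{f}(\Z/l\Z)[S_V]\xrightarrow{\overline{\deg}}\Z/l\Z$, checks that the maps commute with $\sigma$, identifies $f(P_l(V\setminus S_V))$ with $\ker(\overline{\deg})=P_{\ram}(S_V,l)$, and splits the sequence by semisimplicity of $\FF_l[G]$. The only difference is that the paper quotes exactness from Tamagawa, while you check the kernel (via the relations $(\cO(-D'),lD')$) and the image (via divisibility of $\P0(V)$) by hand, which is a valid self-contained substitute.
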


\begin{proof}
Consider the exact sequence (\cite[Equation 3.4]{T}) of abelian groups:
\[
\xymatrix{
0\ar[r]&\P0 (V)[l]\ar@{^{(}->}[r]^\iota &P_l(V\setminus S_V)\ar[r]^f &(\mathbb{Z}/l\mathbb{Z})[S_V]\ar[r]^{\overline{\deg}} &\mathbb{Z}/l\mathbb{Z},
}
\]
where $\iota$ is $[L]\mapsto ([L],0)$, $f(([L],D))= D\mod l$ and $\overline{\deg}(D\mod l)=\deg(D) \mod l$. Clearly, $[\sigma^*L]\mapsto ([\sigma^*L],0)=([L],0)\cdot\sigma$. Also, $(f(([L],D)))\cdot\sigma= \sigma^*D\mod l =f(([\sigma^*L],\sigma^*D)) =f(([L],D)\cdot\sigma)$. Since $\sigma$ is an automorphism of $V$, $\deg (D)=\deg(\sigma^*D)$ for any Cartier divisor $D$. Hence $\iota$, $f$ and $\overline{\deg}$ are all $\Z/l\Z$-module homomorphisms and they commute with $\sigma$. So $\iota$, $f$ and $\overline{\deg}$ are $G$-module homomorphisms. Note that $\ker(\overline{\deg})=P_{\ram}(S_V,l)$ by Equation~\eqref{eqn_Pram}. From the exact equation of $G$-modules above and \cite[Theorem~3.4]{S} (since $l \nmid p^a$, $\FF_l[G]$-modules are semisimple), we have a $G$-module isomorphism 
\begin{align*}
P_l(V\setminus S_V)&\cong \P0 (V)[l]\oplus P_l(V\setminus S_V)/\P0 (V)[l] \cong  \P0 (V)[l]\oplus f(P_l(V\setminus S_V)) \\
&\cong \P0 (V)[l]\oplus\ker(\overline{\deg})\\
&\cong \P0 (V)[l]\oplus P_{\ram}(S_V,l).    
\end{align*}
\end{proof}

We fix a $G$-module monomorphism $\imath:P_{\ram}(S_V,l)\hookrightarrow P_l(V\setminus S_V)$ such that $f\circ\imath $ is equal to $ \id|_{P_{\ram}(V\setminus S_V)}$.

\begin{rmk}
Clearly, the $l$-cyclic covers of $V$ corresponding to elements in $\P0 (V)[l]$ are \'etale over $V$.
If $\imath(\Sigma_{v\in S_V} a_vv)=([L],\Sigma_{v\in S_V} a_vv)$, the corresponding $l$-cyclic cover of $V$ is ramified at $v\in S_V$ if and only if $a_v\not\equiv 0\mod l$. In this case, the ramification index at $v$ is $l$ (see the discussion after the definition of $P_m(Y\setminus S)$). 
\end{rmk}

\begin{notation}\label{C_i^X}
We stratify $S_X$ based on $\psi:V\longrightarrow X$.
\begin{enumerate}
 \item For $0\le i \le a$, let $C^X_i$ be the possibly empty subset of points $x\in S_X$ such that there are exactly $p^i$ points in $\psi^{-1}(x)$. Let $r_i=\chash C^X_i$. Note that $r_0$ is the number of totally ramified points in $X$ and $r_a$ is the number of unramified points in $S_X$. Then $S_X$ is the disjoint union of the subsets $C^X_i$. Clearly,  $r_X=\sum r_i$ and $r_V=\sum_{i=0}^a r_ip^i$. 
 
For $x_{i,j}\in C^X_i$, $1\le j\le r_i$, let $\psi^{-1}(\{x_{i,j}\})=\{v_{i,j,1},\ldots,v_{i,j,p^i}\}$ be labelled so that they are permuted in order by $\sigma$ (Lemma~\ref{1lem}). 
\item \label{Vij} Fix an index $I\in\{0, 1,\ldots, a\}$ such that $r_I\neq 0$ is non-empty. Let $r_X>1$. For $(i,j)$ with $0\le i\le a$, $r_i>0$, $1\le j\le r_i$ and $(i,j)\neq (I,1)$, let $V'_{i,j}\longrightarrow X$ be the (connected) $l$-cyclic cover corresponding to the element $x_{i,j}-x_{I,1}$ of $P_{\ram}(S_X,l)\into P_l(X\setminus S_X)$ (by Lemma~\ref{C}). 

Let $V_{i,j}\longrightarrow V$ be the normalization of $V'_{i,j}\times_X V$. These notations hold till the rest of the paper.
\item \label{Wij-Bij} Let $r_i\neq 0$ for some $i$. Let $B_{ij}\subset P_{\ram}(S_V,l)$ be defined by $B_{ij}=\{v_{i,j,1}-v_{i,j,2}, v_{i,j,2}-v_{i,j,3},\ldots, v_{i,j,p^{i}-1}-v_{i,j,p^i} \}$. Note that $B_{i,j}$ is a set of linearly independent elements and also a $G$-set.

Let $W_{i,j}$ denote the subspace of $P_{\ram}(S_V,l)$ generated by the basis $B_{ij}$. Each $W_{i,j}$ is a $G$-stable subspace of dimension $p^i-1$.
\end{enumerate}
\end{notation}

\begin{lm}\label{D}
Let the notations be as in Notation~\ref{C_i^X}(\ref{Vij}). For $(i,j)$ with $0\le i\le a$, $r_i>0$, $1\le j\le r_i$, let an element $D_{i,j}$ in $(\Z/l\Z)[S_V]$ be defined by
\begin{equation}\label{eqn_D}
D_{i,j}=
\begin{cases}
\sum_{u=1}^{p^i} v_{i,j,u} - p^{i-I}\sum_{u=1}^{p^I} v_{I,1,u},~\  I\le i,\\
p^{I-i}\sum_{u=1}^{p^i} v_{i,j,u} - \sum_{u=1}^{p^I} v_{I,1,u},~\ I>i.
\end{cases}
\end{equation}

Then $D_{i,j}\in P_{\ram}(S_V,l)$, $\langle \imath(D_{i,j})\rangle$ is a 1-dimensional $G$-submodule of $P_l(V\setminus S_V)$ with trivial $G$-action and it corresponds to the $l$-cyclic cover $V_{i,j}\longrightarrow V$.
\end{lm}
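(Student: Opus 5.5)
The plan has three parts: show that $D_{i,j}$ has degree zero mod $l$, show that $G$ fixes $\langle \imath(D_{i,j})\rangle$ pointwise, and identify the cyclic cover it defines with $V_{i,j}\to V$.

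\emph{Degree zero.} In the case $I\le i$ the coefficient sum of $D_{i,j}$ is $p^i-p^{i-I}p^I=0$. In the case $I>i$ it is $p^{I-i}p^i-p^I=0$. Hence $D_{i,j}\in P_{\ram}(S_V,l)$ by \eqref{eqn_Pram}.

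\emph{Trivial $G$-action.} By Lemma~\ref{1lem}, $\sigma$ permutes each fibre $\psi^{-1}(x_{i,j})$. So each fibre sum $\sum_u v_{i,j,u}$ is fixed by $\sigma$, and hence $D_{i,j}\in P_{\ram}(S_V,l)^G$. Since $\imath$ is a $G$-module monomorphism, $\imath(D_{i,j})$ is $G$-fixed. If it is nonzero, it spans a $1$-dimensional $G$-submodule with trivial action. It is nonzero once we check $D_{i,j}\neq 0$ in $(\Z/l\Z)[S_V]$. This holds because $(i,j)\neq(I,1)$, so the two fibres are disjoint, and the coefficients are powers of $p$, which are units mod $l$.

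\emph{Identification with $V_{i,j}$.} Here I use pullback. The cover $V'_{i,j}\to X$ corresponds to an element $([L'],x_{i,j}-x_{I,1})\in P_l(X\setminus S_X)$; its divisor component is the one Notation~\ref{C_i^X}(\ref{Vij}) prescribes, and any representative has that form. Writing $k(V'_{i,j})=k(X)[t]/(t^l-f)$, the normalisation of the fibre product with $V$ is given by $t^l=\psi^*f$ over $k(V)$. Its class in $P_l(V\setminus S_V)$ is therefore $\psi^*([L'],x_{i,j}-x_{I,1})=([\psi^*L'],\psi^*(x_{i,j}-x_{I,1}))$. I should check that this pullback respects the relation $L^{\otimes l}\cong\cO(-D)$ and the quotient relation; both follow from functoriality of $\psi^*$.

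Next I compute $\psi^*$ on divisors. The point $x_{i,j}$ has $p^i$ preimages, each with ramification index $p^{a-i}$, so $\psi^*x_{i,j}=p^{a-i}\sum_u v_{i,j,u}$. Likewise $\psi^*x_{I,1}=p^{a-I}\sum_u v_{I,1,u}$. Therefore
\[
\psi^*(x_{i,j}-x_{I,1}) \equiv p^{a-\max(i,I)}\, D_{i,j} \pmod l .
\]
Since $p^{a-\max(i,I)}$ is a unit mod $l$, the pulled-back element generates the same cyclic subgroup as an element with divisor part $D_{i,j}$. Cyclic subgroups correspond to the same cover up to a change of generator.

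The main obstacle is that the pulled-back class has divisor part $cD_{i,j}$ but its line-bundle part need not be the one chosen by the splitting $\imath$. Two classes with the same divisor part differ by an element of $\iota(\P0(V)[l])$. So the real issue is to show that $\psi^*$ of the class lands in $\imath(P_{\ram}(S_V,l))$, or else to choose $\imath$ compatibly on $(P_{\ram})^G$.

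My first attempt will use that $\psi^*$ of the class is $G$-invariant and that its divisor part lies in $(P_{\ram}(S_V,l))^G\cong P_{\ram}(S_X,l)$. I would then use the description of $G$-invariants in Lemma~\ref{subcover_2} to see that the $\P0(V)[l]$-component, which is $G$-invariant, comes from $\P0(X)[l]$. Finally, I would choose the lift of $x_{i,j}-x_{I,1}$ in $P_l(X\setminus S_X)$, which is determined only modulo $\P0(X)[l]$, so as to cancel that component. This matches the pullback with $\imath(cD_{i,j})$, and so $\langle\imath(D_{i,j})\rangle$ corresponds to $V_{i,j}\to V$.
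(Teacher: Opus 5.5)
Your proof is correct and follows the paper's route: the coefficient sum is zero, the fibre sums are $G$-invariant, and the cover is identified by pulling back $x_{i,j}-x_{I,1}$ along $\psi$. You are more careful than the paper in two places. The paper asserts $\psi^*(x_{i,j}-x_{I,1})=D_{i,j}$ on the nose, whereas it is $p^{a-\max(i,I)}D_{i,j}$, which generates the same line. The paper also tacitly assumes that the section $P_{\ram}(S_X,l)\hookrightarrow P_l(X\setminus S_X)$ defining $V'_{i,j}$ is compatible with $\imath$; your adjustment of the lift by an element of $\mathrm{Pic}^0(X)[l]$, using Lemma~\ref{subcover_2} with $K=G$, is exactly what makes that step rigorous.
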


\begin{proof}
Since the sum of the coefficients of $D_{i,j}$ is $0$, $D_{i,j}$ is an element of $P_{\ram}(S_V,l)$. Since $\{v_{i,j,1},\ldots,v_{i,j,p^i}\}$ is the preimage of $x_{i,j}$ under $\psi$, it is a $G$-set and $G$ acts trivially on $D$. So $\langle \imath(D_{i,j})\rangle$ is a 1-dimensional $G$-submodule of $P_l(V\setminus S_V)$ with trivial $G$-action and (by Remark~\ref{cover_rmk} or \cite[Theorem~3.5]{EPCh}) we get a (connected) $(\Z/l\Z)\times G$-cover of $X$ dominating $V$. 
Since the image of $x_{i,j}-x_{I,1} \in P_{\ram}(S_X,l)$ under $\psi^*$ is $D_{i,j}$, this Galois cover is $V_{i,j}\longrightarrow X$.
\end{proof}

\begin{lm}\label{ram}The following holds:
\begin{enumerate}
 \item The $\Z/l\Z$-cover $V_{i,j}\longrightarrow V$ is ramified at each $v_{I,1,u}$, $1\le u\le p^I$, and each $v_{i,j,u}$, $1\le u\le p^i$, and \'etale elsewhere. 

 \item For $i>0$ and any irreducible $G$-submodule $M$ of $W_{i,j}$, the corresponding $M$-cover of $V$ will be ramified at each point in $\psi^{-1}(x_{i,j})=\{v_{i,j,1},\ldots, v_{i,j,p^i}\}$ and \'etale elsewhere.
\end{enumerate}
\end{lm}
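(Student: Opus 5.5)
The plan is to read off ramification from the divisor component of each element of $P_l(V\setminus S_V)$, using the remark after Lemma~\ref{P_ram}. For $\imath(\sum_v a_v v)=([L],\sum_v a_v v)$, the corresponding $l$-cyclic cover of $V$ is ramified exactly at those $v$ with $a_v\not\equiv 0\pmod l$, with ramification index $l$. For an $M$-cover, where $M\subset P_l(V\setminus S_V)$ is spanned by linearly independent elements, Lemma~\ref{C} says the $M$-cover is the normalisation of the fibre product of the cyclic covers attached to a basis of $M$. It dominates every cyclic cover $W_\zeta$ with $\zeta\in M$. Hence a point $v$ is ramified in the $M$-cover if and only if $v$ is ramified in $W_\zeta$ for some $\zeta\in M$, since inertia in the compositum is generated by the inertia of the factors. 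So in both parts the task is to compute which coefficients of the elements of the relevant submodule are nonzero.

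For (1): by Lemma~\ref{D}, $V_{i,j}$ corresponds to $\imath(D_{i,j})$. I inspect the coefficients in \eqref{eqn_D}. When $I\le i$, the coefficient of each $v_{i,j,u}$ is $1$ and that of each $v_{I,1,u}$ is $-p^{i-I}$. When $I>i$, they are $p^{I-i}$ and $-1$ respectively. Since $l\ne p$, all of these are nonzero mod $l$. No cancellation occurs because $(i,j)\neq(I,1)$, so the two fibres $\psi^{-1}(x_{i,j})$ and $\psi^{-1}(x_{I,1})$ are disjoint. Every other point of $V$ has coefficient $0$. The remark then gives ramification exactly at the listed points and nowhere else.

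For (2): every element of $W_{i,j}$ is a combination of the $B_{ij}$, so it is supported in $\psi^{-1}(x_{i,j})$. The $M$-cover is therefore étale outside this fibre. It remains to show that every point of the fibre ramifies. Take $0\ne \zeta=\sum_u a_u v_{i,j,u}\in M$. Its support is nonempty, so some $v_{i,j,u_0}$ is ramified. Now $M$ is $G$-stable, and $\zeta\cdot\sigma^k$ has coefficient $a_{u_0}$ at $\sigma^{-k}v_{i,j,u_0}$. By Lemma~\ref{1lem}, $G$ acts transitively on the fibre, so every $v_{i,j,u}$ lies in the support of some element of $M$, and hence ramifies in the $M$-cover. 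Equivalently, the $M$-cover is Galois over $X$ by Remark~\ref{cover_rmk}, so ramification over $x_{i,j}$ at one point forces it at all points of the fibre. The hypothesis $i>0$ only guarantees that $W_{i,j}\ne 0$, since its dimension is $p^i-1$.

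The main obstacle is justifying that the ramification locus of the compositum $M$-cover is the union of the ramification loci of the cyclic subcovers. I would handle this locally in the tame setting. Étaleness of the compositum over a point follows from étaleness of each factor, since fibre products of étale covers are étale. The converse holds because the cover dominates each $W_\zeta$.
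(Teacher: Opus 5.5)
Your proposal is correct and follows essentially the same route as the paper. For (1) you read off the nonzero coefficients of $D_{i,j}$ modulo $l$; for (2) you take a nonzero element of $M$, use its $\sigma$-translates and the transitivity of $G$ on $\psi^{-1}(x_{i,j})$, and use the fact that the $M$-cover dominates each cyclic subcover. Beyond the paper, you also justify the ``\'etale elsewhere'' part of (2), which the paper leaves implicit: elements of $W_{i,j}$ are supported on $\psi^{-1}(x_{i,j})$, and fibre products of \'etale covers are \'etale.
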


\begin{proof}
The cover $V_{i,j}$ corresponds to $([L],D_{i,j})\in\imath(P_{\ram}(S_V,l))$, where $f(([L],D_{i,j}))=D_{i,j}\mod l\in P_{\ram}(S_V,l)$. Since the coefficient of each $v_{i,j,u}$ (resp. $v_{I,1,u}$) is 1 or $p^{I-i}$ (resp. $p^{I-1}$ or 1), it is nonzero in $\Z/l\Z$. The part (1) follows from \cite[3.15. Lemma]{EV}) (see the discussion after the definition of $P_m(Y\setminus S)$).

Let $w$ be a nonzero element of $M\subset W_{i,j}=\langle B_{i,j}\rangle$. Then $w=\sum_{u=1}^{p^i}a_{i,j,u}v_{i,j,u}$ for the coefficients $a_{i,j,u}\in\Z/l\Z$. Then at least one coefficient, say $a_{i,j,\kappa}$, is nonzero for some $\kappa$ between 1 and $p^i$. 
Then $a_{i,j,\kappa}v_{i,j,\kappa+t}$ is present in the expression of $w\cdot\sigma^t$, where the index $\kappa+t$ is taken modulo $p^i$. Note that $\{\kappa+t \text{ modulo }p^i :1\le t\le p^i\}=\{1,\ldots,p^i\}$. Then the cover corresponding to $\imath(w\cdot\sigma^t)$ is ramified at $v_{i,j,{\kappa+t}}$. Since $G$ acts transitively on $\psi^{-1}(x_{i,j})$, $M$ is $G$-stable and the $M$-cover of $V$ dominates the cover corresponding to every element in $M$ (by Lemma~\ref{C}), the $M$-cover is ramified at each point in $\psi^{-1}(x_{i,j})$.
\end{proof}
 Let $M_{s,t}$ be as in Notation~\ref{basic_not}.
\begin{thm}\label{2lem}
Let $S_X$ be non-empty. For $0\le i\le a$, let $r_i$ be the number of points $x$ in $S_X$ such that there are exactly $p^i$ points in $\psi^{-1}(x)$. There is a $G$-module isomorphism $$P_{\ram}(S_V,l) \cong \mathbb{F}_l^{r_X-1}\oplus(\oplus_{s=1}^a \oplus_{t=1}^{p^{s-1}(p-1)/d_s}M_{s,t}^{\sum_{i=s}^a r_i}).$$
In particular,
 $$P_l(V\setminus S_V) \cong \P0 (V)[l]\oplus\mathbb{F}_l^{r_X-1}\oplus(\oplus_{s=1}^a \oplus_{t=1}^{p^{s-1}(p-1)/d_s}M_{s,t}^{\sum_{i=s}^a r_i})$$
\end{thm}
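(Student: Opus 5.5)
The plan is to decompose the permutation module $\FF_l[S_V]=(\Z/l\Z)[S_V]$ orbit by orbit, and then remove one trivial summand to pass to the degree-zero part $P_{\ram}(S_V,l)$. The ``in particular'' then follows immediately from Lemma~\ref{P_ram}.

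\textbf{Step 1: orbit decomposition.} By Lemma~\ref{1lem}, $S_V$ is the disjoint union of the $G$-orbits $\psi^{-1}(x_{i,j})$. As a $G$-module,
$$\FF_l[S_V]=\bigoplus_{i,j}\FF_l[\psi^{-1}(x_{i,j})].$$
The orbit over $x_{i,j}$ has $p^i$ points, which $\sigma$ permutes cyclically. Hence $\FF_l[\psi^{-1}(x_{i,j})]\cong \FF_l[G/G_{i}]$, where $G_i$ is the subgroup of order $p^{a-i}$. In terms of $\sigma$, this is the module $\FF_l[x]/(x^{p^i}-1)$ with $\sigma$ acting by multiplication by $x$. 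The cyclic vector is $v_{i,j,1}$, and the inverse in the action formula only relabels the generator.

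\textbf{Step 2: decompose each orbit module.} Since $l\nmid p$, the polynomial $x^{p^i}-1$ is separable over $\FF_l$. By Notation~\ref{basic_not},
$$x^{p^i}-1=(x-1)\prod_{s=1}^{i}\prod_{t}P_{s,t}(x),$$
with all factors distinct and irreducible. The Chinese remainder theorem then gives
$$\FF_l[x]/(x^{p^i}-1)\cong \FF_l\oplus\bigoplus_{s=1}^i\bigoplus_t M_{s,t}.$$
Each orbit over a point of $C^X_i$ therefore contributes one trivial summand and one copy of each $M_{s,t}$ with $s\le i$. Summing over all orbits, $\FF_l[S_V]$ contains $r_X$ trivial summands, and $M_{s,t}$ occurs with multiplicity $\sum_{i\ge s}r_i$.

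\textbf{Step 3: pass to the kernel of the degree map.} The map $\overline{\deg}:\FF_l[S_V]\to\FF_l$ is a surjective $G$-homomorphism onto the trivial module. It is nonzero because $S_X$, and hence $S_V$, is nonempty. Its kernel is $P_{\ram}(S_V,l)$. Since $\FF_l[G]$ is semisimple, $\FF_l[S_V]\cong P_{\ram}(S_V,l)\oplus\FF_l$. Krull--Schmidt (uniqueness of isotypic multiplicities for semisimple modules) then lets us cancel one trivial summand. This gives
$$P_{\ram}(S_V,l)\cong \FF_l^{r_X-1}\oplus\bigoplus_{s,t}M_{s,t}^{\sum_{i\ge s}r_i}.$$
Combining this with Lemma~\ref{P_ram} yields the second isomorphism. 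As a consistency check, the trivial part has dimension $r_X-1$, which matches $P_{\ram}(S_V,l)^G\cong P_{\ram}(S_X,l)$ from Notation~\ref{Pram}.

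The only point requiring care is Step 2: identifying the orbit module with $\FF_l[x]/(x^{p^i}-1)$ and confirming that its irreducible constituents are exactly the $M_{s,t}$ with $s\le i$. This needs two facts. First, $G$ acts through the quotient $\langle\sigma\rangle/\langle\sigma^{p^i}\rangle$, so every irreducible constituent is killed by $\sigma^{p^i}-1$. Second, the factorization of $x^{p^i}-1$ into the distinct irreducible factors $P_{s,t}$ from Notation~\ref{basic_not}, which holds because $\gcd(l,p)=1$.
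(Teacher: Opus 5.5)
Your proof is correct and is essentially the paper's argument. Both rest on the same fact: each fibre $\psi^{-1}(x_{i,j})$ is a single cyclic $G$-orbit of size $p^i$, so $\sigma$ acts on its span through $x^{p^i}-1$, which splits over $\FF_l$ into $x-1$ and the distinct factors $P_{s,t}$ with $s\le i$.

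The only difference is how you pass to the degree-zero part. You decompose the full permutation module $\FF_l[S_V]$ and cancel one trivial summand by semisimplicity and Krull--Schmidt. The paper instead writes the complement explicitly inside $P_{\ram}(S_V,l)$: the trivial lines spanned by the $D_{i,j}$, plus the augmentation pieces $W_{i,j}=\langle B_{ij}\rangle$, on which $\sigma$ has characteristic polynomial $(x^{p^i}-1)/(x-1)$. Those explicit submodules are what the paper reuses in Lemma~\ref{ram} and Theorem~\ref{min_alg} to control where the resulting covers ramify.
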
 

\begin{proof}
In view of Lemma~\ref{P_ram}, it is enough to show that
$$P_{\ram}(S_V,l)\cong\mathbb{F}_l^{r_X-1} \oplus (\oplus_{s=1}^a \oplus_{t=1}^{p^{s-1}(p-1)/d_s} M_{s,t}^{\sum_{i=s}^a r_i})$$ as $G$-modules. As a vector space, $P_{\ram}(S_V,l)$ has dimension $\chash(S_V)-1=r_V-1$ over $\FF_l$. From Equation~\ref{D}, for $(s,\nu)$ with $0\le s\le a$, $r_s>0$ and $1\le\nu\le r_s$, we have 
\[D_{s,\nu}=\begin{cases}
\sum_{u=1}^{p^s} v_{s,\nu, u} -\left(p^{s-I}\right)\sum_{u=1}^{p^I} v_{I,1,u},~\  I\le s,\\
\left(p^{I-s}\right)\sum_{u=1}^{p^s} v_{s,\nu, u} - \sum_{u=1}^{p^I} v_{I,1,u},~\ I>s.
\end{cases}\]
Now $B_0=\{D_{s,\nu}\mid 0\le s\le a, 1\le \nu\le r_s,(s,\nu)\neq (I,1)\}$ is a set of linearly independent elements and each element generates a $G$-submodule of $P_{\ram}(S_V,l)$ with trivial action, by Lemma~\ref{D}. Hence, the number of $G$-components of $P_{\ram}(S_V,l)$, with trivial action, is at least $\chash B_0=r_X-1$.

The dimension of $W_{s,\nu}$ is $p^s-1$ and $W_{s,\nu}\cap (\oplus_{(s,\nu)\neq(s',\nu')}W_{s',\nu'})=\{0\}$. Since $B_0$ and $B_{s\nu}$'s are disjoint and there union is a set of linearly independent elements, $\dim(\oplus_{s=1}^a \oplus_{\nu=1}^{r_s} W_{s,\nu})$ $=\sum_{s=1}^a r_s(p^s-1)$. Also, $\dim(P_{\ram}(S_V,l))= r_V-1=(\sum_{s=1}^a r_sp^s) - 1=r_X-1+\sum_{s=1}^a r_s(p^s-1)$. Then we have the following $G$-module decomposition:
\begin{equation}\label{eqn_3.14}
P_{\ram}(S_V,l)\cong \mathbb{F}_l^{r_X-1}\oplus(\oplus_{s=1}^a \oplus_{\nu=1}^{r_s} W_{s,\nu}).
\end{equation}
 
Since $(v_{s,\nu, u}-v_{s,\nu,u+1})\cdot\sigma=v_{s,\nu,u+1}-v_{s,\nu,u+2}$ 
(the index $u$ is taken modulo $p^s$), the characteristic polynomial of $\sigma_{|W_{s,\nu}}$ is $(x^{p^s}-1)/(x-1)=\prod_{\eta=1}^s \prod_{t=1}^{\phi(p^\eta)/d_\eta}P_{\eta, t}(x)$. From its factorization (see Notation~\ref{basic_not}), we have the $G$-module decomposition: 

\begin{equation}\label{W_ij-decomposition}
W_{s,\nu}\cong \oplus_{\eta=1}^s \oplus_{t=1}^{\phi(p^\eta)/d_\eta}
(\mathbb{F}_l[x]/(P_{\eta, t}(x))= \oplus_{\eta=1}^s \oplus_{t=1}^{\phi(p^\eta)/d_\eta}M_{\eta,t}.
\end{equation}
For $1 \le \nu \le r_\eta$, the multiplicity of $M_{\eta t}$ in $W_{s,\nu}$ is 1 if $s \ge \eta$, the multiplicity is zero if $s < \eta$. From Equations~\ref{eqn_3.14} and \ref{W_ij-decomposition},

\begin{align*}
P_{\ram}(S_V,l)\cong\FF_l^{r_X-1}\oplus&(\oplus_{s=1}^a \oplus_{\nu=1}^{r_s} \oplus_{\eta=1}^s \oplus_{t=1}^{\phi(p^\eta)/d_\eta} M_{\eta,t})\\
\cong\FF_l^{r_X-1}\oplus&\left[\oplus_{s=1}^a \oplus_{\nu =1}^{r_s}\left\{\left(\oplus_{t=1}^{\phi(p)/d_1}M_{1,t}\right)\oplus
\ldots\oplus\left(\oplus_{t=1}^{\phi(p^s)/d_s}M_{s,t}\right)\right\}\right]\\ 
\cong \FF_l^{r_X-1}\oplus&\left(\oplus_{t=1}^{\phi(p)/d_1}M_{1,t}^{r_1}\right)\\
&\oplus\left[\left(\oplus_{t=1}^{\phi(p)/d_1}M_{1,t}^{r_2}\right)\oplus\left(\oplus_{t=1}^{\phi(p^2)/d_2}M_{2,t}^{r_2}\right) \right]\\
&\oplus\ldots\\
&\oplus\left[\left(\oplus_{t=1}^{\phi(p)/d_1}M_{1,t}^{r_a}\right)\oplus\left(\oplus_{t=1}^{\phi(p^2)/d_2}M_{2,t}^{r_a}\right)\oplus\ldots\oplus\left(\oplus_{t=1}^{\phi(p^a)/d_a}M_{a,t}^{r_a}\right) \right]\\
\cong \FF_l^{r_X-1}\oplus&( \oplus_{s=1}^a \oplus_{t=1}^{\phi(p^s)/d_s}M_{s,t}^{\sum_{\nu=s}^a r_\nu}).
\end{align*}
We put $\phi(p^s)=p^{s-1}(p-1)$ and replace $\nu$ by $i$ to get the required equation.
\end{proof}

\begin{notation}
Let $X_s$ denote the normalization of $V/\langle \sigma^{p^s}\rangle$ for $0\le s\le a$ and $g_s$ denote the genus of $V_s$. Let $g_{-1}:=0$. Here $X_0=X$ and $X_a=V$.
\end{notation}
Let the restriction of $\sigma:P_l(V\setminus S_V)\longrightarrow P_l(V\setminus S_V)$ on ${\P0(V)[l]}$ be also denoted by $\sigma$.
\begin{lm}\label{genus_subcover}
The dimension of $\ker\left(\Phi_{p^s}(\sigma):{\P0(V)[l]}\longrightarrow {\P0(V)[l]}\right)$ over $\FF_l$ is $2g_s-2g_{s-1}$ for $0\le s\le a$.
\end{lm}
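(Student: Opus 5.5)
The plan is to compute $\dim_{\FF_l}\ker(\sigma^{p^s}-1)$ on $\P0(V)[l]$ for every $0\le s\le a$, and then peel off the cyclotomic factors one at a time using the coprime factorisation of $x^{p^s}-1$ over $\FF_l$.

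\emph{Step 1: the kernel of $\sigma^{p^s}-1$.} Let $K_s=\langle\sigma^{p^s}\rangle\le G$, a subgroup of order $p^{a-s}$. By definition $X_s$ is the normalisation of $V/K_s$, which is a $G/K_s\cong\Z/p^s\Z$-cover of $X$. An element of $\P0(V)[l]$ is fixed by $K_s$ if and only if it is fixed by the generator $\sigma^{p^s}$. Hence
\[
\ker\bigl(\sigma^{p^s}-1\bigr)=\P0(V)[l]^{K_s}.
\]
Lemma~\ref{subcover_2}, applied with $K=K_s$, identifies this with $\P0(X_s)[l]$. Since $l\neq p$, the $l$-torsion of the Jacobian of a curve of genus $g_s$ over the algebraically closed field $k$ is $(\Z/l\Z)^{2g_s}$. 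This is also the case $r=0$ of the formula $P_m(Y\setminus S)\cong(\Z/m\Z)^{2g+r-1+b^{(2)}}$, with $b^{(2)}=1$. Therefore
\[
\dim\ker(\sigma^{p^s}-1)=2g_s .
\]
For $s=0$ this already gives the statement: $\Phi_1(x)=x-1$, and $2g_0-2g_{-1}=2g_0$ because $g_{-1}=0$.

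\emph{Step 2: primary decomposition.} Over $\FF_l$ we have $x^{p^s}-1=\prod_{\eta=0}^{s}\Phi_{p^\eta}(x)$. Because $l\nmid p^s$, the polynomial $x^{p^s}-1$ is separable over $\FF_l$, so the factors $\Phi_{p^\eta}(x)$ are pairwise coprime. Notation~\ref{basic_not} records the finer fact that their irreducible factors $P_{\eta,t}$ are all distinct. By the Chinese remainder theorem, or equivalently the primary decomposition of the $\FF_l[x]$-module $\P0(V)[l]$ with $x$ acting as $\sigma$,
\[
\ker\bigl(\sigma^{p^s}-1\bigr)=\bigoplus_{\eta=0}^{s}\ker\Phi_{p^\eta}(\sigma).
\]
Taking dimensions gives $2g_s=\sum_{\eta=0}^{s}\dim\ker\Phi_{p^\eta}(\sigma)$ for each $s$. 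Subtracting the identity for $s-1$ from the identity for $s$ yields $\dim\ker\Phi_{p^s}(\sigma)=2g_s-2g_{s-1}$ for $1\le s\le a$, as required.

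\emph{Expected obstacle.} The main point to check carefully is Step~1: that the fixed submodule under $K_s$ is exactly $\P0(X_s)[l]$, with pullback injective so that no dimension is lost, and that $\P0(X_s)[l]$ has dimension $2g_s$. Both are supplied by Lemma~\ref{subcover_2} (which rests on \cite[Proposition~3.9]{EPCh}) together with the standard structure of $l$-torsion in Jacobians. Once these are in place, Step~2 is formal linear algebra.
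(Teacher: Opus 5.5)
Your argument is correct, but it is organised differently from the paper's proof. The paper inducts down the tower. With $\tau=\sigma^{p^{a-1}}$, it uses only the two-factor splitting $x^p-1=(x-1)\Phi_p(x)$ and Lemma~\ref{subcover_2} for the order-$p$ subgroup $\langle\tau\rangle$ to write $\P0(V)[l]=\P0(X_{a-1})[l]\oplus\ker\Phi_p(\tau)$. It then converts $\ker\Phi_p(\tau)$ into $\ker\Phi_{p^a}(\sigma)$ via $\Phi_{p^a}(x)=\Phi_p(x^{p^{a-1}})$, and reruns the argument on $X_{a-1}\to X$ with the induced generator $\overline{\sigma}$.

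You instead apply Lemma~\ref{subcover_2} once to each subgroup $K_s=\langle\sigma^{p^s}\rangle$, which gives $\dim\ker(\sigma^{p^s}-1)=2g_s$ for all $s$ at once. You then split that kernel by the full coprime factorisation $x^{p^s}-1=\prod_{\eta\le s}\Phi_{p^\eta}(x)$ and telescope. Your version needs no induction and no identity relating $\Phi_{p^a}$ to $\Phi_p$.

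It also handles automatically a point the paper leaves implicit when it says $\overline{\sigma}=\sigma$ on $\P0(X_{a-1})[l]$. The paper needs $\ker\Phi_{p^{s}}(\sigma)$, taken on all of $\P0(V)[l]$, to lie in the $\langle\sigma^{p^s}\rangle$-invariants; this holds because $\Phi_{p^s}(x)$ divides $x^{p^s}-1$. In return, the paper's version only ever decomposes under a single automorphism of order $p$ on each curve of the tower.

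Both proofs rest on the same two inputs. The first is Lemma~\ref{subcover_2}, including injectivity of pullback on $l$-torsion. This also follows directly because $h_*h^*$ is multiplication by $\deg h=p^{a-s}$, which is prime to $l$. The second is $\dim_{\FF_l}\P0(Y)[l]=2g_Y$ for $l\neq p$.
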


\begin{proof}
Let $\tau=\sigma^{p^{a-1}}$ which is an automorphism of order $p$. Consider the action of $\tau$ on $V$ and $\P0(V)[l]$. Consider the intermediate $p$-cyclic cover $V\longrightarrow X_{a-1}$ where $X_{a-1}$ is the normalization of $V/\langle\tau\rangle$. Since $\ker(\tau-\id)=\P0(X_{a-1})[l]$ (Lemma~\ref{subcover_2}) and the minimal polynomial of $\tau$ divides $x^p-1=(x-1)\Phi_p(x)$, we have $\P0(V)[l]=\P0(X_{a-1})[l]\oplus \ker(\Phi_p(\tau))$. Since
$\Phi_{p^a}(x)=\Phi_p(x^{p^{a-1}})$ (see Notation~\ref{basic_not}), $\ker(\Phi_p(\tau))=\ker(\Phi_{p^{a}}(\sigma))\subset \P0(V)[l]$ and it has dimension $2g_a-2g_{a-1}=2g_V-2g_{a-1}$.

Let $\overline{\sigma}$ be the image of $\sigma$ in the quotient group $\Aut(X_{a-1}|X)$, a $p^{a-1}$-cyclic group generated by $\overline{\sigma}$. We also denote the restriction of $\overline{\sigma}$ on $\P0(X_{a-1})[l]$ by $\overline{\sigma}$. As before, we see that the dimension of the kernel of $\Phi_{p^{a-1}}(\overline{\sigma}):\P0(X_{a-1})[l]\longrightarrow\P0(X_{a-1})[l]$ is $2g_{a-1}-2g_{a-2}$. But as linear operators on $\P0(X_{a-1})[l]$, $\overline{\sigma}=\sigma$. Hence $\ker(\Phi_{p^{a-1}}({\sigma}))$ has dimension $2g_{a-1}-2g_{a-2}$. Now the result follows from induction.
\end{proof}

The result below follows directly from Theorem~\ref{2lem} and Lemma~\ref{genus_subcover}.

\begin{co}\label{dim-ker}
Let $S_X$ be non-empty. The dimension $n_s$ of $\ker(\Phi_{p^s}(\sigma))\subset P_l(V\setminus S_V)$ is $$n_s=2g_s-2g_{s-1}+p^{s-1}(p-1)\sum_{i=s}^a r_i \text{ for }1\le s\le a.$$
The dimension of $ P_l(V\setminus S_V)^G$ is $n_0=2g_X+r_X-1$.
\end{co}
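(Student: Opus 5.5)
The plan is to compute the dimension of $\ker(\Phi_{p^s}(\sigma))$ on each summand of the $G$-module decomposition $P_l(V\setminus S_V)\cong \P0(V)[l]\oplus P_{\ram}(S_V,l)$ given by Lemma~\ref{P_ram}. The kernel of a polynomial in $\sigma$ respects $G$-stable direct sums, so $n_s$ is the sum of the dimensions of the kernels on the two summands. The same additivity applies to the invariants $P_l(V\setminus S_V)^G$, which is the kernel of $\sigma-\id=\Phi_1(\sigma)$.

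For the Picard part, Lemma~\ref{genus_subcover} already says that $\ker(\Phi_{p^s}(\sigma))$ on $\P0(V)[l]$ has dimension $2g_s-2g_{s-1}$. For $s=0$ we read $\Phi_{p^0}(x)=x-1$. There the kernel is $\P0(V)[l]^G$, which by Lemma~\ref{subcover_2} with $K=G$ equals $\P0(X)[l]$ and so has dimension $2g_X=2g_0-2g_{-1}$.

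For the ramification part, we use the explicit decomposition of Theorem~\ref{2lem}. On a simple summand $M_{\eta,t}=\FF_l[x]/P_{\eta,t}(x)$, the operator $\Phi_{p^s}(\sigma)$ acts as multiplication by $\Phi_{p^s}(x)$. This is zero when $\eta=s$, because $P_{s,t}$ divides $\Phi_{p^s}$. When $\eta\ne s$ it is invertible, because $x^{p^a}-1$ is separable over $\FF_l$ (as $l\ne p$), so $\Phi_{p^s}$ and $\Phi_{p^\eta}$ share no factors; hence $P_{\eta,t}$ is coprime to $\Phi_{p^s}$. On the trivial summand $\FF_l$, multiplication by $\Phi_{p^s}(1)=p\neq 0$ in $\FF_l$ is invertible for $s\ge 1$. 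Hence for $s\ge1$ the kernel on $P_{\ram}(S_V,l)$ is $\oplus_t M_{s,t}^{\sum_{i\ge s} r_i}$. Its dimension is $\frac{p^{s-1}(p-1)}{d_s}\cdot d_s\cdot\sum_{i=s}^a r_i=p^{s-1}(p-1)\sum_{i=s}^a r_i$.

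For $s=0$, the $G$-invariants of $P_{\ram}(S_V,l)$ are exactly the trivial summand $\FF_l^{r_X-1}$, since each nontrivial irreducible $M_{\eta,t}$ has no invariants. Adding the two parts gives $n_0=2g_X+r_X-1$. The only mildly delicate point is checking the coprimality of distinct cyclotomic factors mod $l$, and this follows from the separability just noted.
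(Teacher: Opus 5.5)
Your proof is correct and follows the paper's argument: you split $P_l(V\setminus S_V)$ via Lemma~\ref{P_ram}, read off the kernel on $P_{\ram}(S_V,l)$ from the decomposition in Theorem~\ref{2lem}, and use Lemma~\ref{genus_subcover} (equivalently Lemma~\ref{subcover_2} at $s=0$) for the Picard part. You also spell out two things the paper's short proof leaves implicit: why only the $M_{s,t}$ summands lie in $\ker(\Phi_{p^s}(\sigma))$, using separability of $x^{p^a}-1$ over $\FF_l$ and $\Phi_{p^s}(1)=p\neq 0$, and the explicit computation of $n_0$.
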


\begin{proof}
 By Lemma~\ref{P_ram}, $P_l(V\setminus S_V)=P_{\ram}(S_V,l)\oplus \P0(V)[l]$.
 Note that the kernel of $\Phi_{p^s}(\sigma)$ restricted to $P_{\ram}(S_V,l)$  is $\oplus_{t=1}^{p^{s-1}(p-1)/d_s}(\FF_l[x]/P_{s,t}(x))^{\sum_{i=s}^a r_i})$ (Theorem~\ref{2lem}) and its dimension is $p^{s-1}(p-1)\sum_{i=s}^a r_i$. Now use Lemma~\ref{genus_subcover}.
\end{proof}

\begin{rmk}
The values of $n_s$, $0\le s\le a$, are independent of the prime number $l$.
\end{rmk}

\begin{ex}\label{ex1}
Let $X$ be $\PP^1_k=\Spec k[x]\cup \Spec k[\frac{1}{x}]$, $G=\langle\sigma\rangle\cong\Z/9\Z$. Hence $p=3$ and $a=2$. Let $l=2$ and $S_X=\infty$. Let $\psi:V\longrightarrow\PP^1_k$ 
be the $G$-cover given by a Witt vector $(f_1,f_2)$ for polynomials $f_1$ of degree 2, $f_2$ of degree $4$ in k[x]. We will show that there exists a $(\Z/2\Z)^n\rtimes (\Z/9\Z)$-cover of $\PP^1_k$, \'etale over $\Aff^1_k$, dominating $V$, for some group action $G\to\Aut((\Z/2\Z)^n)$ if and only if $0<n=6, 12, 18,24,30; 2,8,14,20, 26, 32$.

As discussed in Subsection~\ref{ASW_eqn}, $k(V)=k(x)[x_1,x_2]/I$ where the ideal $I$ is generated by $x_1^3-x_1=f_1$,
$x_2^3-x_2+x_1^{7}-x_1^{5}=f_2$.
 Also $x_1^{7}-x_1^{5}-f_2\neq g_2^3-g_2$ for any $g_2\in k(x,x_1)$ because the Witt vector $(f_1,f_2)$ induces the $\Z/9\Z$-Galois  extension $k(V)/k(x)$. Let $X_1\longrightarrow \PP^1_k$ be the intermediate cover, with function field $k(x,x_1)$ ($\subset k(V)$).
We use the Riemann-Hurwitz formula and the Hilbert different formula to calculate the genera of $X_1$ and $V$.
Clearly, $V\longrightarrow \PP^1_k$ is ramified only at the preimage of $\infty$ in $X_1$. Since $f_1$ has pole of order $2$ only at $\infty$, the genus of $X_1$ is $g_1=(3-1)(2-1)/{2}=1$ (see Subsection~\ref{genus-u}). Note that $x$ and $x_1$ has poles of respective orders $3$, $2$ at $\infty$. Clearly, $x_1^{7}-x_1^{5}-f_2$ has pole of order $14$ at the point in $X_1$ above $\infty$ and so the genus of $V$ is $g_V=[2+3(2g_1-2)+(14+1)(3-1)]/2=16$ (see Subsection~\ref{genus-AS}). Note that here $d_1=2$, $d_2=6$. 

By Lemma~\ref{genus_subcover}, $n_0=0$, $n_1=2g_1-0=2$ and $n_2=2g_V-2g_1=30$. By Proposition~\ref{n_char}, there exists a $(\Z/2\Z)^n\rtimes_\theta (\Z/9\Z)$-cover of $\PP^1$ that dominates $V$ for some $G$-action $\theta$ if and only if $n=0+2\gamma_1+6\gamma_2$, $\gamma_1\le 2/2=1$, $\gamma_2\le 30/6=5$. Hence there exists a $(\Z/2\Z)^n\rtimes (\Z/9\Z)$-cover of $\PP^1_k$, \'etale over $\Aff^1_k$, dominating $V$, for some group action $G\to\Aut((\Z/2\Z)^n)$ if and only if the values of $n$ are as mentioned above.
\end{ex}

\section{Minimal genus of Galois covers dominating a cyclic $p$-group cover}
The hypothesis and notations are as in the previous section (see Notation~\ref{basic_not} and \ref{C_i^X}). Recall that $G\cong \Z/p^a\Z$ is generated by $\sigma$, $H\cong (\Z/l\Z)^n$ and $\theta:G\longrightarrow \Aut(H)$ is a $G$-action. Throughout this section, $a$, $n$ and $\theta$ will remain fixed. Let $\psi: V\longrightarrow X$ be the $G$-cover as in the previous section. 

Let $\Psi:W\longrightarrow V$ be an $H$-Galois cover such that the composition $W\longrightarrow X$ is an $H\rtimes_\theta G$-cover. Let $\mathcal{B}_{\Psi}$ be the branch locus of $\Psi$ in $V$. Let $B^W_i:= C^X_i \cap \psi(\text{ Branch locus of } \Psi)$, for $0\le i\le a$. These are possibly empty subsets of $X$. Note that the image of $\mathcal{B}_{\Psi}$ in $X$ is the disjoint union of $B_i^W$. 
\begin{pro}\label{genus}
The genus of $W$ is
$$g_W=\frac{2+|H|(2g_V-2)+l^{n-1}(l-1)(\sum_{i=0}^a|B_i^W|p^i)}{2}.$$
  
\end{pro}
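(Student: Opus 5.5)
We apply the tame Riemann--Hurwitz formula of Subsection~\ref{genus-tame} to the $H$-Galois cover $\Psi:W\longrightarrow V$. Since $|H|=l^n$ is prime to $p$, this gives
$$2g_W-2=|H|(2g_V-2)+\sum_{w\in W}(e_w-1).$$
Everything then reduces to computing the ramification contribution. We will show that every point of $W$ over a branch point $v\in\mathcal{B}_\Psi$ has ramification index exactly $l$. We will also count these points and the branch points.

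\textbf{Step 1: ramification index at a branch point.} Since $H$ is abelian, the inertia group $I_w\subset H$ at $w$ over $v$ is cyclic; in tame ramification inertia is cyclic. As $H\cong(\Z/l\Z)^n$ has exponent $l$, any nontrivial $I_w$ has order $l$. Hence $e_w=l$ for every $w$ above $v\in\mathcal{B}_\Psi$. The fibre over $v$ then has $|H|/l=l^{n-1}$ points. So $v$ contributes $l^{n-1}(l-1)$ to $\sum_w(e_w-1)$, giving
$$\sum_{w\in W}(e_w-1)=l^{n-1}(l-1)\,|\mathcal{B}_\Psi|.$$
Alternatively, this can be seen through Lemma~\ref{C}: $W$ is the compositum of the cyclic covers $W_{\mathfrak L}$ for $\mathfrak L$ in the submodule $\langle B\rangle$, each having index $l$ or $1$ at $v$, by the remark following Lemma~\ref{P_ram}.

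\textbf{Step 2: counting branch points.} We show $\mathcal{B}_\Psi$ is a union of full fibres of $\psi$. The composite $W\longrightarrow X$ is Galois, so the branch locus of $\Psi$ is stable under $G=\Aut(V|X)$. Concretely, if $\Psi$ is branched at $v$, then for $g\in\Gamma$ lifting $\sigma$, the cover is branched at $\sigma(v)$, since $g$ carries inertia groups to conjugate (here, equal) subgroups. Because $G$ acts transitively on each fibre $\psi^{-1}(x)$ (Lemma~\ref{1lem}), $\mathcal{B}_\Psi=\psi^{-1}(\psi(\mathcal{B}_\Psi))$. Each $x\in B_i^W\subset C_i^X$ has exactly $p^i$ preimages, and $\psi(\mathcal{B}_\Psi)=\bigsqcup_i B_i^W$. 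Therefore
$$|\mathcal{B}_\Psi|=\sum_{i=0}^a |B_i^W|p^i.$$

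Substituting into Riemann--Hurwitz and solving for $g_W$ gives the stated formula. The only step needing care is the $G$-stability of the branch locus in Step 2. We expect this to be the main (though mild) obstacle: one must use that $W\to X$ is Galois, so that automorphisms of $W$ over $X$ permute the fibres of $\Psi$ compatibly with $\sigma$ on $V$. It is also worth checking that the points of $\mathcal{B}_\Psi$ lie in $S_V$, so that the $B_i^W$ exhaust the image of $\mathcal{B}_\Psi$. This holds because $W\to X$ is étale away from $S_X$.
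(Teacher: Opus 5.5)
Your proof is correct and follows the paper's own route: apply the tame Riemann--Hurwitz formula to $\Psi$, observe that every point of $W$ over a branch point has ramification index $l$ (so each such fibre has $l^{n-1}$ points), and count $p^i$ branch points of $\Psi$ over each $x\in B_i^W$. Beyond the paper, you also justify two facts it uses tacitly: tame inertia is cyclic in a group of exponent $l$, and $\mathcal{B}_\Psi$ is $G$-stable because $W\to X$ is Galois.

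One harmless slip: for $g\in\Gamma$ lifting $\sigma$, the inertia group at $g(w)$ is $gI_wg^{-1}$. This is the image of $I_w$ under $\theta(\sigma)^{\pm 1}$ and need not equal $I_w$ when $\theta$ is nontrivial (e.g.\ $p=3$, $l=2$, $H$ a $2$-dimensional irreducible module). Your argument only needs $gI_wg^{-1}$ to be nontrivial, which it is.
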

\begin{proof}
Note that for any $x\in B^W_i$ and for every $w\in (\psi\circ\Psi)^{-1}(\{x\})$, the ramification index $e(w|\Psi(w))=l$. Hence $\chash(\Psi^{-1}(\Psi(w)))=|H|/l=l^{n-1}$. Since there are $p^i$ points in $S_V$ over each $x\in B_i^W$ and $l^{n-1}$ points in $W$ over each $v\in \psi^{-1}(\{x\})$, by Riemann-Hurwitz formula (see Subsection~\ref{genus-tame}), 
$2g_W-2=|H|(2g_V-2)+l^{n-1}(\sum_{i=0}^a|B_i^W|p^i)(l-1).$
\end{proof}

Now we can calculate the minimal genus of $H\rtimes_\theta G$-covers of $X$ dominating $\psi:V\longrightarrow X$ in specific cases.

 For $t\ge 0$, recall that $M_{1,t}$'s are the (non-isomorphic) non-trivial irreducible representations of $G$ over $\FF_l$ (see Notation~\ref{basic_not}). Let $M_0$ be the $G$-representation with trivial $G$-action. Note that $H$ is a $G$-module (via $\theta:G\longrightarrow \Aut(H)$). 

\begin{thm}\label{min_gen}
Let $G\cong \Z/p^a\Z$, $H\cong (\Z/l\Z)^n$, $\theta$ be a $G$-action on $H$. Let $a$, $n$ and $\theta$ be fixed. Let there be a $G$-module monomorphism from $H$ into $P_l(V\setminus S_V)$. Then there exist $H\rtimes_{\theta}G$-covers of $X$, dominating $V$. Let $H'$ be a maximal $G$-submodule of $H$ such that $H'$ can be embedded in $\P0(V)[l]$
.
Let $\alpha_0$ (resp. $\alpha_{1,t}$, $t\ge 1$) be the multiplicity of $M_0$ (resp. $M_{1,t}$) in $H/H'$.

The minimum of genera of $H\rtimes_\theta G$-covers of $X$, \'etale over $X\setminus S_X$, dominating $V\longrightarrow X$ is $g_{\min}=[2+|H|(2g_V-2)+l^{n-1}(l-1)\mathfrak{c}]/2$ where $\mathfrak{c}$ is as follows:
\begin{enumerate}
    \item Let $S_X=C^X_0$, i.e. each $v$ in $S_V$ is totally ramified. Then $\mathfrak{c}=\alpha_0+1.$

    \item Let $S_X=C^X_1$, i.e. for each 
    $x \in S_X$, let there be $p$ elements in $\psi^{-1}(x)$. Let $\alpha_0\neq 0$. Then $\mathfrak{c}=p\max\{\alpha_0+1,\alpha_{1,1},\alpha_{1,2}\ldots\}.$
    
\end{enumerate}

\end{thm}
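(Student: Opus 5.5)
By Proposition~\ref{genus}, the genus of any such $W$ is determined by $c(W):=\sum_{i}|B_i^W|p^i$, the number of points of $S_V$ over which $\Psi:W\to V$ is branched. So the plan is to show that $\mathfrak{c}$ is the minimum of $c(W)$ over all $G$-submodules $N\subset P_l(V\setminus S_V)$ isomorphic to $H$. By Remark~\ref{cover_rmk} and \cite[Theorem~3.5]{EPCh}, these submodules are exactly the covers in question. Existence of at least one cover is immediate from the assumed monomorphism together with Remark~\ref{cover_rmk}.

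\textbf{Lower bound.} Write $N$ via Lemma~\ref{P_ram} and let $\pi:N\to P_{\ram}(S_V,l)$ be the projection, a $G$-map. Then $\ker\pi=N\cap\P0(V)[l]$ embeds in $\P0(V)[l]$. By semisimplicity $N\cong \ker\pi\oplus\pi(N)$, and by maximality of $H'$ (multiplicities being well defined) $\pi(N)$ contains at least $\alpha_0$ copies of $M_0$ and $\alpha_{1,t}$ copies of $M_{1,t}$. The cover is branched at $v$ exactly when some element of $\pi(N)$ has nonzero coefficient at $v$ (Remark after Lemma~\ref{P_ram}, Lemma~\ref{C}). Let $T\subset S_X$ be the set of points under the branch locus, and note $\pi(N)\subset P_{\ram}(\psi^{-1}(T),l)$.

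In case (1), Theorem~\ref{2lem} applied to $\psi^{-1}(T)$ gives $P_{\ram}(\psi^{-1}(T),l)\cong\FF_l^{|T|-1}$, so $|T|-1\ge\alpha_0$ and $c=|T|\ge\alpha_0+1$. In case (2), the same decomposition gives $P_{\ram}(\psi^{-1}(T),l)\cong\FF_l^{|T|-1}\oplus\bigoplus_t M_{1,t}^{|T|}$. Hence $|T|\ge\alpha_0+1$ and $|T|\ge\alpha_{1,t}$ for every $t$, so $c=p|T|\ge p\max\{\alpha_0+1,\alpha_{1,t}\}$. One subtlety is that $|T|\ge1$ requires $\pi(N)\ne0$, which holds because $\alpha_0\neq0$ in (2), and in (1) because $\mathfrak{c}=\alpha_0+1\ge1$ even when $H/H'=0$. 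For the case $\alpha_0=0$ in (1), I expect the bound to come from a branched cover being forced unless $H$ embeds into $\P0(V)[l]$; this boundary case needs care, and I would check it against the intended statement.

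\textbf{Upper bound.} Choose $T\subset S_X$ of the minimal size above; this is possible if $r_X$ is large enough, which is guaranteed by the existence of an embedding of $H$. By Theorem~\ref{2lem}, $P_{\ram}(\psi^{-1}(T),l)$ contains submodules $Q\cong\FF_l^{\alpha_0}\oplus\bigoplus_t M_{1,t}^{\alpha_{1,t}}$. Choose $Q$ so that its elements together have nonzero coefficients on all of $\psi^{-1}(T)$; concretely, use the $D_{i,j}$ of Lemma~\ref{D} with base point $x_{I,1}\in T$, and the $W_{1,j}$ summands of Lemma~\ref{ram}(2). This is still fine for the lower bound, since branching is then exactly at $\psi^{-1}(T)$. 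Set $N=H'\oplus\imath(Q)$, with $H'\subset\P0(V)[l]$. This is a $G$-submodule isomorphic to $H$ whose cover is branched exactly over $\psi^{-1}(T)$, so $c(W)=|T|$ in case (1) and $p|T|$ in case (2). Substituting into Proposition~\ref{genus} gives $g_{\min}$.

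The main obstacle is the lower bound's reduction: showing that the non-$\P0$ part of $N$ must project injectively with multiplicities at least those of $H/H'$, and controlling the count $|T|$ including the trivial summand. I would handle this via the semisimple splitting and Krull--Schmidt for $\FF_l[G]$-modules.
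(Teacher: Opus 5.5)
Your proposal is correct and follows the paper's proof. Like the paper, you split $P_l(V\setminus S_V)=\P0(V)[l]\oplus P_{\ram}(S_V,l)$, bound $|T|$ by comparing multiplicities of $M_0$ and $M_{1,t}$ in $P_{\ram}(\psi^{-1}(T),l)$ via Theorem~\ref{2lem}, and realize the bound with the $D_{i,j}$ and the summands of the $W_{1,j}$. Your projection $\pi=f|_N$ with $\ker\pi=N\cap\P0(V)[l]$ gives the multiplicity bound for an arbitrary $N$. That is cleaner than the paper's unjustified ``we may assume $H\cap\P0(V)[l]=H'$''.

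The one thing to fix is the boundary case you flagged. Your sentence ``in (1) because $\mathfrak{c}=\alpha_0+1\ge1$'' is circular, and it cannot be repaired, because the statement is false there. In case (1), $P_{\ram}(S_V,l)\cong\FF_l^{r_0-1}$ has only trivial summands, so $\alpha_0=0$ forces $H/H'=0$, i.e.\ $H$ embeds in $\P0(V)[l]$. That submodule gives an \'etale $W\to V$, so the true value is $\mathfrak{c}=0$. In fact $\mathfrak{c}=1$ is impossible in any case, since a nonzero element of $P_{\ram}$ has at least two nonzero coefficients. So (1) needs the hypothesis $\alpha_0\ge1$, as in (2). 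The paper's proof tacitly uses this hypothesis in the step $|B_0^{W'}|\ge\dim(H'')+1$.
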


\begin{proof}
Let $\Psi:W\longrightarrow V$ be an $H$-Galois cover such that the composition $\psi\circ\Psi:W\longrightarrow X$ is a  $H\rtimes_\theta G$-Galois cover of $X$, \'etale away from $S_X$. 
Hence by \cite[Theorem~3.5]{EPCh} the cover $\Psi$ corresponds to a $G$-submodule of $P_l(V\setminus S_V)$ isomorphic to $H$. We denote this $G$-submodule of $P_l(V\setminus S_V)$ also by $H$.  
The genus of $W$ is determined by the number of ramification points of the $H$-cover $W\longrightarrow V$, i.e., it depends on $|\mathcal{B}_{\Psi}|$. If $W$ is of minimum genus among all such curves, then $H\cap \P0(V)[l]=H'$. This is because the elements in $\P0(V)[l]$ correspond to \'etale covers of V, and ramified covers have higher genus. Let $H''$ be a $G$-submodule of $H$ such that $H=H'\oplus H''$. Then $H''\cong H/H'$. To ensure $W$ has minimal genus, $H''\subset P_{\ram}(S_V,l)$ should be such that the number of ramification points $|\mathcal{B}_{\Psi}|=\sum_{i=0}^a|B_i^W|p^i$ for the cover $W\longrightarrow V$ is minimum, by Proposition~\ref{genus}. Here $B_i^W$ is as in Proposition~\ref{genus}. Now we work out each scenario separately.

 In (1), 
$S_X=C_0^X=\{x_1,\ldots,x_{r_0}\}$, $P_{\ram}(S_V,l)\cong \FF_l^{r_0-1}$ (Theorem~\ref{2lem}) and hence $H''\cong M_0^{\alpha_0}\cong\FF_l^{\alpha_0}$. Here, the $G$-action is trivial. Note that $\alpha_0+1\le r_0$. Put $H''\cong \langle x_2-x_1,\ldots, x_{\alpha_0+1}-x_1\rangle$ so that for the resultant $H'\oplus H''$-cover $W\longrightarrow V$, $B_0^W=\{x_1,\ldots, x_{\alpha_0+1}\}$. Then the $H$-cover $W\longrightarrow V$ has the required genus. For any other $H$-cover $W'\longrightarrow V$ satisfying the given conditions, $|B_0^{W'}|\ge \dim(H'')+1= \alpha_0+1$. Therefore, the genus of $W$ above is minimal.
 
 In (2),
$S_X=C^X_1=\{x_1,\ldots x_{r_1}\}, r_X=r_1, \chash(S_V)=pr_1$, and by Theorem~\ref{2lem} $$P_{\ram}(S_V,l)\cong \mathbb{F}_l^{r_1-1}\oplus[\oplus_{t=1}^{(p-1)/d_1} M_{1,t}^{r_1}]$$ 
and hence $$H''\cong 
\mathbb{F}_l^{\alpha_0}\oplus[\oplus_{t=1}^{(p-1)/d_1} M_{1,t}^{\alpha_{1,t}}].$$ 
Let $A=\max\{\alpha_0+1,\alpha_{1,1},\alpha_{1,2},\ldots\}$. Choose $\langle x_2-x_1,\ldots, x_{\alpha_0+1}-x_1\rangle \cong\mathbb{F}_l^{\alpha_0}$. Let $W_{i,j}$ be as in Notation~\ref{C_i^X}(\ref{Wij-Bij}).
For $1\leq j\leq \alpha_{1,t}$, choose the irreducible submodule isomorphic to $M_{1,t}$ from a fixed decomposition of $W_{1,j}\cong \oplus_{\tau=1}^{(p-1)/d_1}\FF_l[x]/P_{1,\tau}(x)$ (see \eqref{W_ij-decomposition} in the proof of Theorem~\ref{2lem}). This will give $G$-stable submodules isomorphic to $M_{1,t}^{\alpha_{1,t}}$. From these and $H'\subset \P0(V)[l]$, we form an $H$-cover $W$ of $V$, and $B^W_1=\{x_1,\ldots, x_A\}$.  Then the genus of $W$ is $g_{\min}$.

Let $W'$ be another $H\rtimes_\theta G$-cover of $X$ dominating $V$, \'etale away from $S_X$. Then $B^{W'}_1$ is a subset of $ S_X$. We may assume that the $G$-submodule $H$ of $P_l(V\setminus S_V)$ corresponding to the $H$-cover $W'\rightarrow V$ satisfies $H\cap \P0(V)[l]=H'$. Then the genus of $W'$ is $g_{W'}=[2+|H|(2g_V-2)+l^{n-1}(l-1)(|B_1^{W'}|p)]/2$. Note that the summand $\mathbb{F}_l^{\alpha_0}$ of $H''$ ensures that there are at least $\alpha_0+1$ points in $B_1^{W'}$.
Since the multiplicities of $M_{1,t}$ in $H''$ and $P_{\ram}(\psi^{-1}(B_1^{W'}),l)$ are $\alpha_{1,t}$ and $\chash(B_1^{W'})$ respectively, and $H''\into P_{\ram}(\psi^{-1}(B_1^{W'}),l)$, it follows that $\#(B_1^{W'})\geq \alpha_{1,t}$. Therefore $g_{W'}\geq  g_{\min}$.

\end{proof}

Let the hypothesis and the notations be as in the Proposition~\ref{genus}. Let $M_{s,t}$ denote the irreducible $G$-module $\FF_l[x]/P_{s,t}(x)$ for $1\le s\le a$, $1\le t\le p^{s-1}(p-1)/d_s$ (where $\sigma$ acts by multiplication by $x$) as in Notation~\ref{basic_not}. Let $W_{i,j}$ be as in Notation~\ref{C_i^X}(\ref{Wij-Bij}). For $1\le j\le r_s$, the multiplicity of $M_{s,t}$ in $W_{i,j}$ is $1$ if $i\ge s$, the multiplicity is zero if $i<s$ (see equation \eqref{W_ij-decomposition}). 

Note that $H\cong (\Z/l\Z)^n$ is a $G$-module via the $G$-action $\theta$ and $H'$ is the largest possible $G$-submodule of $H$ that can be embedded in $\P0(V)[l]$. Let the multiplicity of the $G$-representation $\FF_l$ with trivial $G$-action (resp. $M_{s,t}$) in $H/H'$ be $\alpha_0$ (resp. $\alpha_{s,t}$). 

\begin{proof}[Proof of Theorem~\ref{min_alg}]
Let $H''$ be a $G$-submodule of $H$ such that $H=H'\oplus H''$ as $G$-modules. Then $H''\cong H/H'$.
Let $\Psi:W\longrightarrow V$ be an $H$-cover such that $W\longrightarrow X$ is an $H\rtimes_{\theta} G$-cover of $X$. Since we are looking for $W$ with minimum genus, we may assume that the corresponding $G$-submodule $H$ of  $P_{l}(V\setminus S_V)$ (by \cite[Theorem~3.5]{EPCh} ) satisfies $H'=H\cap \P0(V)[l]$ and $H''=H\cap P_{\ram}(S_V,l)$ (since the direct summand of $H$ embedded into $\P0(V)[l]$ does not contribute to the degree of the ramification divisor of $\Psi$). By Proposition~\ref{genus}, the genus of $W$ depends on the cardinality $\sum_{s=0}^a p^s|B^W_s|$ of the (tamely) branched points of $\Psi$ in $V$. Put $y_s=|B^W_s|$. Then we have to minimise $\sum_{s=0}^ap^sy_s$ for integer values of $y_s$. The constraints on $y_s$ in the statement of the theorem come from the constraints on $B^W_s$ as we show below.

Let $R(i,j)=\{(i,j)\mid x_{i,j}\in\sqcup_{i=1}^aB^W_i\}$. By Lemma~\ref{ram}, the non-trivial part $\oplus_s\oplus_{t}M_{s,t}^{\alpha_{s,t}}$ of $H''$ is embedded into $\oplus_{(i,j)\in R(i,j)}W_{i,j}$. Also, for $s>0$, $M_{s,t}^{\alpha_{s,t}}$, a subspace of $\ker (\Phi_{p^s}(\sigma|_{H''}))$, is embedded into $\ker(\Phi_{p^s}(\sigma|_{P_{\ram}(S_V,l)}))=\oplus_{i=s}^a \oplus_{j=1}^{r_s}W_{ij}$. Hence the image of $M_{s,t}^{\alpha_{s,t}}\subset H''$ must be in $$(\oplus_{R(i,j)}W_{i,j})\cap(\oplus_{i=s}^a \oplus_{j=1}^{r_i}W_{i,j})=\oplus_{R(i,j)}W_{i,j}.$$ Note that the multiplicity of $M_{s,t}$ in $\oplus_{R(i,j)}W_{i,j}$ is $\sum_{i=s}^a|B^W_i|$. Hence $\sum_{i=s}^a|B^W_i|\ge\alpha_{s,t}$, the multiplicity of $M_{s,t}$ in $H''$, for each $t\in\{1, \ldots, (p^{s-1}(p-1))/d_s\}$. This provides the constraint $\sum_{i=s}^a y_s\ge \max_t\{\alpha_{s,t}\}$.

Clearly, if $\alpha_0>0$, then $\FF_l^{\alpha_0}\into P_{\ram}(S_V,l)$ factors through $\FF_l^{\alpha_0}\into P_{\ram}(S_X,l)$ and the corresponding $\FF_l^{\alpha_0}$-cover of $X$ (dominated by $W\longrightarrow X$) is ramified over at least $\alpha_0+1$ points in $\sqcup_{s=0}^a B^W_s$. Hence, $\sum_{i=0}^a|B^W_i|\ge\alpha_0+1$. Since $B^W_s\subset C^X_s$, $|B^W_s|\le r_s$. Hence the constraints $\sum_{s=0}^ay_s\ge \alpha_0+1$ and $0\le y_s\le r_s$.

Suppose the minimum value $c_{\min}$ is attained at $y_s=\lambda_s$. For $s\ge 0$, we choose subsets $\Lambda_s$ of cardinality $\lambda_s$ in $C^X_s$. For $s\ge 1$, we can define $M_{s,t}^{\alpha_{s,t}}\into \oplus_{x_{i,j}\in\sqcup_{i=s}^a\Lambda_i}W_{i,j}$ (since the multiplicity of $M_{s,t}$ in the latter is $\sum_s^a\lambda_i\ge \max_t\{\alpha_{s,t}\}$). For $\FF_l^{\alpha_0}\into P_{\ram}(S_X,l)\subset P_{\ram}(S_V,l)$, we choose $\alpha_0+1$ points $\{u_0,\ldots,u_{\alpha_0}\}$ in $\sqcup_{i=0}^a\Lambda_i$ and define $\FF_l^{\alpha_0+1}\stackrel{\cong}{\longrightarrow}$ $\langle u_i-u_0\mid 1\le i\le \alpha_0\rangle$. 
Also choose a $G$-submodule of $\P0(V)[l]$ isomorphic to $H'$. Taking their sum we get a $G$-submodule of $P_l(V\setminus S_V)$ isomorphic to $H$. 

This will give us an $H$-cover $\Psi':W'\longrightarrow V$ (by \cite[Theorem~3.5]{EPCh}) and the image of its branch locus in $X$ will be contained in $\sqcup_{i=0}^a\Lambda_i$ (by Lemma~\ref{ram}). In fact, the image is exactly $\sqcup_{i=0}^a\Lambda_i$ as $\sum_0^ap^s\lambda_s$ is minimum. Clearly, the genus of $W'$ is $\mathfrak{g}$ by the Riemann-Hurwitz formula, and it is the minimum.
\end{proof}

\begin{ex}
Let $X$ be $\PP^1_k$ and $x$ be a local coordinate. Let $p=5$, $l=11$, and $a=2$. Hence $G=\langle\sigma\rangle \cong \Z/25\Z$. Let
$S_X=\sqcup_{i=0}^2 C^X_i$ where each $C^X_i=\{\varepsilon_{i1},\varepsilon_{i2},\varepsilon_{i3}\}\subset \Aff^1_k$ has three points. Let $\psi:V\longrightarrow\PP^1_k$ be given by the Witt vector $(f_1,f_2)$, where $f_1$ (resp. $f_2$) has poles of order $2$ at $\varepsilon_{0j}$ (resp. $\varepsilon_{1j}$) for $1\le j\le 3$. 
Let $H=(\Z/11\Z)^{853}$ has a $G$-module decomposition $$H\cong\P0(V)[11]\oplus (\FF_{11})^3 \oplus \left(\frac{\FF_{11}[T]}{(T-3)}\right)^4
\oplus \left(\frac{\FF_{11}[T]}{(T-4)}\right)^3
\oplus \left(\frac{\FF_{11}[T]}{(T^5-9)}\right)^2
\oplus \left(\frac{\FF_{11}[T]}{(T^5-5)}\right)$$ as a $G$-module and $\theta$ be the corresponding $G$-action (where $\sigma$ acts on the non-trivial summands by multiplication by $T$). 

We want to calculate the minimum $g_{\min}$ of genera of $((\Z/11\Z)^{853}\rtimes_\theta \Z/25\Z)$-covers of $X$, \'etale away from $S_X$, dominating $V$. 

Note that by Subsection~\ref{ASW_eqn}, $k(V)=k(x)[x_1,x_2]/I$, where $$I=\langle x_1^5-x_1-f_1, x_2^5-x_2+x_1^{21}-2x_1^{17}+2x_1^{13}-x_1^9-f_2\rangle.$$ Note that $r_0=r_1=r_2=3$ and $r_X=\sum_i r_i=9$. Let $X_1\longrightarrow \PP^1_k$ be the ($\Z/5\Z$-Galois) intermediate cover given by the normalization of $\PP^1_k$ in $k(x,x_1)$ ($\subset k(V)$). It is totally ramified over $C^X_0$ as $f_1$ has poles of order $2$ only at those points. Since $x_1\in k(X_1)$ has poles (of order $2$) only at those ramified points and $f_2$ does not, $V\longrightarrow \PP^1_k$ is totally ramified at points over $C^X_0$. Since $f_2-(x_1^{21}-2x_1^{17}+2x_1^{13}-x_1^9)$ has poles (of order coprime to $5$) only at points in $X_1$ over $C^X_0\sqcup C^X_1$, $V\longrightarrow X_1$ is totally ramified at these points, i.e., $V\longrightarrow \PP^1_k$ is ramified at points in $C^X_1$, with ramification index $5$. In fact, the order of the pole of $f_2-(x_1^{21}-2x_1^{17}+2x_1^{13}-x_1^9)$ at the unique point in $X_1$ over $x_{0,j}$ (respectively, at each of the 5 points in $X_1$ over $x_{1,j}$) is $42$ (respectively $2$),   Let $S_V=\psi^{-1}(S_X)$. Now we are in the setup of the above corollary.

Since $11^1\equiv 1$ modulo $5$, $11^2\equiv 1$ modulo $25$, we have $d_1=1$, $d_2=5$. Also, $3^2=9$, $3^3=5$, and $3^4=4$ in $\FF_{11}$. Then we have a $G$-module decomposition: $$P_{1,1}(V\setminus S_V)\cong \P0(V)[11]\oplus (\FF_{11})^8\oplus \left(\oplus_{t=1}^4 \left(\frac{\FF_{11}[T]}{(T-3^t)}\right)^6 \right)\oplus \left(\oplus_{t=1}^4 \left(\frac{\FF_{11}[T]}{(T^5-3^t)}\right)^3 \right).$$ We have $H=H'\oplus H''$, where $H'=\P0(V)[11]$. We want to define an injective homomorphism $H''\hookrightarrow P_{\ram}(S_V,l)$. Here $\max_t \alpha_{2,t}=2,$, $\max_t \alpha_{1,t}=4$, $\alpha_0+1=4$.

By Theorem~\ref{min_alg}, we have to minimize
$y_0+5y_1+25y_2$, subject to the following conditions: $y_2\ge 2, y_1+y_2\ge 4, y_0+y_1+y_2\ge 4, 0\le y_s\le 3, y_s\in\Z$.

Clearly the minimum value is attained at $y_2=2$, $y_1=2$, $y_0=0$ and the minimum value is $c_{\min}=0+5\times 2+25\times 2=60$.

Again, by the Riemann-Hurwitz formula and the Hilbert different formula (see Subsection~\ref{genus-AS}), we obtain the genus.
If $g_1$ is the genus of $X_1$, then $2g_1=2+5(0-2)+3\times (2+1)(5-1)=28$ and hence $$g_V=1+\frac{5(28-2)+3\times(42+1)(5-1)+3\times5\times(2+1)(5-1)}{2}=414.$$
Note that $|H|=11^{828+3+(4+3)\times 1+(2+1)\times 5}=11^{853}$. Also $828=2g_V=|\P0(V)[11]|$.
Then the minimal genus is given by $$g_{\min}=\frac{2+11^{853}(828-2)+11^{853-1}(11-1)(c_{\min})}{2}=1+11^{852}\times 4843.$$
\end{ex}

\begin{rmk}
 In Theorem~\ref{min_gen}, when $S_X=C^X_0\sqcup C^X_1$ we can still compute the minimum genus $g_{\min}=\left[2+|H|(2g_V-2)+l^{n-1}(l-1)\mathfrak{c}\right]/{2}$. Let $r_i=|C^X_i|$. Then $\mathfrak{c}$ is as follows. 
    \begin{enumerate}
    \item If $\alpha_{1,t}=0$ for all $t\ge 1$
	\begin{enumerate}
	\item if $\alpha_0\leq r_0-1$, then $\mathfrak{c}=(\alpha_0+1)$,
	\item if $r_0\leq \alpha_0\leq r_0+r_1-1$, then $\mathfrak{c}=[(\alpha_0+1-r_0)p+r_0]$. 
	\end{enumerate}
    \item If $\alpha_{1,t}\neq 0$ for some $t\ge 1$
	\begin{enumerate}
	\item if $\alpha_0+1\leq \max_{t>0}\{\alpha_{1,t}\}$, then $\mathfrak{c}=(\max_{t>0}\{\alpha_{1,t}\})p$,
	\item if $\max_{t>0}\{\alpha_{1,t}\}<\alpha_0+1\leq \max_{t>0}\{\alpha_{1,t}\}+r_0$, then $$\mathfrak{c}=[(\max_{t>0}\{\alpha_{1,t}\})p+(\alpha_0+1-\max_{t>0}\{\alpha_{1,t}\})],$$
	\item if $\max_{t>0}\{\alpha_{1,t}\}+r_0<\alpha_0+1\leq r_0+r_1 $, then $\mathfrak{c}=[(\alpha_0+1-r_0)p+r_0]$.
	\end{enumerate}
\end{enumerate}

	For proof see \cite[Corollary~5.21]{pThesis}.
\end{rmk}

\begin{rmk}
Theorems \ref{min_alg} and \ref{min_gen} and the above remark can also be phrased in terms of finding the minimum genus among all covers providing proper solutions for the ``embedding problem" $(\beta:H\rtimes_\theta G\onto G, \alpha:\pi_1^{\e}(X\setminus S_X)\onto G)$ where the map $\alpha$ corresponds to the $G$-cover $\psi: V\longrightarrow X$.
\end{rmk}

\section{The case of covers of $\PP^1$ branched only at infinity}\label{last_section}
Now we specialize to the case when $X$ is $\PP^1$, $S_X=\{\infty\}$ and hence $X\setminus S_X$ is $\Aff^1$. Recall that for $1\le s\le a$, $d_s$ is the order of $l$ in $\Z/p^s\Z$. Note that the group action  $(\Z/p^a\Z)\longrightarrow\Aut((\Z/l\Z)^n)$ varies through out this section; $n$ varies only in Remark~\ref{rmk-p-cyclic} and Theorem~\ref{p^2-Galois}. The value of $a$ is fixed in this section. In  Proposition~\ref{p-cyclic} and Remark~\ref{rmk-p-cyclic} we fix $a=1$, whereas $a=2$ in Corollary~\ref{p2}, Theorem~\ref{p^2-Galois} and the last example.



\begin{lm}\label{quasi-p}
Let $a$ and $n$ be fixed positive integers. Then there exists a quasi-$p$ group $((Z/l\Z)^n\rtimes (\Z/p^a\Z))$ corresponding to some group action $\Z/p^a\Z\longrightarrow \Aut((Z/l\Z)^n)$ if and only if the order of $l$ in $(\Z/p^a\Z)^*$, $d_1$, divides $n$.
\end{lm}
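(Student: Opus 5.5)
The plan is to reduce quasi-$p$-ness of $\Gamma=H\rtimes_\theta G$, with $H=(\Z/l\Z)^n$ and $G=\langle\sigma\rangle\cong\Z/p^a\Z$, to a statement about the $\FF_l[G]$-module $H$. Then the result follows from the dimension count of the irreducible modules $M_{s,t}$ in Notation~\ref{basic_not}. Throughout, $d_1$ denotes the order of $l$ in $(\Z/p\Z)^*$, as in Notation~\ref{basic_not}.

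First I will identify the subgroup $N$ of $\Gamma$ generated by all Sylow $p$-subgroups. Since $|H|$ is prime to $p$, the Sylow $p$-subgroups are the conjugates $hGh^{-1}$ with $h\in H$. Write $H$ additively and let $\sigma$ act on it via $\theta$. The commutator $h\sigma h^{-1}\sigma^{-1}$ then corresponds to the element $h-\sigma(h)$ of $H$. Hence $N$ contains $G$ and the subgroup $(\sigma-1)H$. Conversely, each generator $h\sigma^i h^{-1}$ of $N$ lies in $((\sigma-1)H)\rtimes G$, since $(\sigma-1)H$ is $G$-stable. Therefore $N=((\sigma-1)H)\rtimes G$, and $\Gamma$ is quasi-$p$ if and only if $(\sigma-1)H=H$.

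Next I will use semisimplicity: $l\nmid p^a$, so $\FF_l[G]$-modules are semisimple, as already used in Lemma~\ref{P_ram}. It follows that $H=\ker(\sigma-1)\oplus \operatorname{im}(\sigma-1)$. So $\Gamma$ is quasi-$p$ exactly when $H^G=0$, that is, when $H$ has no summand with trivial $G$-action. In that case $H\cong\oplus_{s,t}M_{s,t}^{\beta_{s,t}}$, and $n=\sum_{s,t}\beta_{s,t}d_s$.

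It remains to note that $d_1\mid d_s$ for every $s\ge 1$. Indeed, $l^{d_s}\equiv 1 \pmod{p^s}$ implies $l^{d_s}\equiv 1\pmod p$, so $d_1$ divides $n$. Conversely, if $d_1\mid n$, I will take $\theta$ to be the action making $H\cong M_{1,1}^{n/d_1}$. This has $H^G=0$, since $M_{1,1}$ is irreducible and nontrivial, so the resulting $\Gamma$ is quasi-$p$.

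The only step needing care is the identification $N=((\sigma-1)H)\rtimes G$, which involves the commutator computation and checking that the right-hand side is a subgroup. The rest is routine given Notation~\ref{basic_not}.
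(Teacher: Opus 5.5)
Your proof is correct, but it takes a different route from the paper's. The paper proves the two implications separately.
\begin{itemize}
\item For quasi-$p\Rightarrow d_1\mid n$, it decomposes $H=H_1\oplus\cdots\oplus H_t$ into irreducibles and views $\Gamma$ as the fibre product of the $H_j\rtimes G$ over $G$. A summand $H_j$ with trivial action would then give a prime-to-$p$ quotient $\Gamma\twoheadrightarrow H_j$.
\item For the converse, it takes $H\cong M_{1,1}^{n/d_1}$ and supposes that the subgroup $Q$ generated by the Sylow $p$-subgroups is proper. It chooses a $G$-complement $H''$ to $H\cap Q$ in $H$. The commutators $ghg^{-1}h^{-1}$ with $h\in H''$ lie in $H''\cap Q=0$, so $H''$ would be a nonzero summand with trivial action, a contradiction.
\end{itemize}

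You instead compute $Q$ outright as $((\sigma-1)H)\rtimes G$ and combine this with $H=H^G\oplus(\sigma-1)H$. This gives the single criterion ``$\Gamma$ is quasi-$p$ iff $H^G=0$'', which settles both directions at once. The driving identity is the same commutator computation the paper uses in its converse. Your version gives more, though: it identifies the maximal prime-to-$p$ quotient explicitly as $\Gamma/Q\cong H/(\sigma-1)H\cong H^G$, and it avoids both the fibre-product description and the argument by contradiction. The paper's version has the small advantage of being phrased directly through prime-to-$p$ quotients, which is how quasi-$p$-ness enters Abhyankar's conjecture.

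One wording point. The reason $h\sigma^ih^{-1}\in((\sigma-1)H)\rtimes G$ is that $h\sigma^ih^{-1}=(h\sigma h^{-1})^i$ with $h\sigma h^{-1}=(h-\sigma(h))\sigma$. The $G$-stability of $(\sigma-1)H$ is what makes the right-hand side a subgroup in the first place.

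You were right to read $d_1$ as the order of $l$ modulo $p$, as in Notation~\ref{basic_not}. Taken literally as the order of $l$ in $(\Z/p^a\Z)^*$, the statement is false. For example, with $p=3$, $l=2$, $a=2$, $n=2$ and $H=M_{1,1}$, the group is quasi-$3$, yet the order of $2$ modulo $9$ is $6$. The paper's own proof, and its final example with $n=14$, use $d_1$ in your sense.
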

\begin{proof}
Let the group $((Z/l\Z)^n\rtimes (\Z/p^a\Z))$ be quasi-$p$. Then it can not have a prime-to-$p$ quotient. Also $(Z/l\Z)^n$ is a $(\Z/p^a\Z)$-representation over $\FF_l$. Then we have the following decomposition into irreducible $(\Z/p^a\Z)$-submodules
$$(\Z/l\Z)^n=H_1\oplus \ldots \oplus H_t.$$
Then $((Z/l\Z)^n\rtimes (\Z/p^a\Z))$ is the fibre product of $ H_j\rtimes (\Z/p^a\Z)$ for $1\le j\le t$ over $\Z/p^a\Z$. If any $H_j$ is a $(\Z/p^a\Z)$-representation with trivial $(\Z/p^a\Z)$-action, then $H_j\rtimes (\Z/p^a\Z)$ is the direct product. Hence $((Z/l\Z)^n\rtimes (\Z/p^a\Z))$ will have a prime-to-$p$ quotient group isomorphic to $H_j$ (via the composition of maps $(Z/l\Z)^n\rtimes (\Z/p^a\Z)\onto H_j\times (\Z/p^a\Z) \onto H_j$). This is a contradiction. Thus, all the $H_j$'s are non-trivial irreducible representations. Then each will have dimension $d_s$ over $\FF_l$, for some $s$, $1\le s\le a$. But $d_1\mid d_s$, and $n$ is the sum of these dimensions $d_s$. Then $d_1\mid n$. 

Conversely, if $d_1\mid n$, we can write $(Z/l\Z)^n$ as $\oplus_{n/d_1} ((\Z/l\Z)^{d_1})$, where $(\Z/l\Z)^{d_1}$ is a non-trivial irreducible $(\Z/p^a\Z)$-representation of dimension $d_1$ (see Notation~\ref{basic_not}). For this $\Z/p^a\Z$-action let $((Z/l\Z)^n\rtimes (\Z/p^a\Z))$ be the corresponding semidirect product and $Q$ be its quasi-$p$ subgroup. If $Q$ is a proper subgroup of $((Z/l\Z)^n\rtimes (\Z/p^a\Z))$ then $((Z/l\Z)^n\rtimes (\Z/p^a\Z))/Q \cong (\Z/l\Z)^m$ for some nonzero $m$. Let $H'=(Z/l\Z)^n\cap Q$. Since $(Z/l\Z)^n$ and $Q$ are normal subgroups of $((Z/l\Z)^n\rtimes (\Z/p^a\Z))$, so is $H'$. Hence, $H'$ is a $(\Z/p^a\Z)$-subrepresentation of $(Z/l\Z)^{n}$ of dimension $n-m$. Hence there exist a $(\Z/p^a\Z)$-subrepresentation $H''$ of $(Z/l\Z)^{n}$ with $H'\oplus H''=(\Z/l\Z)^n$. For $h\in H''$ and $g\in \Z/p^a\Z$, note that $(ghg^{-1})h^{-1}$ is in $H''$ and in quasi-$p$ subgroup $Q$ (as $(ghg^{-1})h^{-1}=g(hg^{-1}h^{-1})$). But $Q\cap H''$ is the trivial subgroup, hence $ghg^{-1}=h$, i.e. $H''$ is a $\Z/p^a\Z$-representation with trivial $\Z/p^a\Z$-action. Contradiction to our choice of $(\Z/p^a\Z)$-action on $(\Z/l\Z)^n$.
\end{proof}

\begin{rmk}
Note that the lemma also follows from Proposition~\ref{n_char} and Lemma~\ref {genus_subcover}. But we have given a more elementary proof suggested to us by the referee.
\end{rmk}

\begin{notation}
For any real number $\varepsilon$, $\lceil \varepsilon\rceil$ denotes the lowest integer greater than or equal to $\varepsilon$, and $\lceil \varepsilon\rceil'$ denotes the lowest integer, not divisible by $p$, greater than or equal to $\varepsilon$. 
\end{notation}
\begin{pro}\label{p-cyclic}
Let $a=1$ and $n$ be a fixed positive integer divisible by $d_1$ and let $\delta'_1=\lceil\frac{n}{p-1}+1\rceil'$. The minimum genus of all $((\Z/l\Z)^n\rtimes (\Z/p\Z))$-Galois covers of $\PP^1$, \'etale away from $\infty$, for a fixed $n$ and all homomorphisms $\Z/p\Z\longrightarrow \Aut((\Z/l\Z)^n)$ is $$1+\frac{l^n}{2}(p\delta'_1-\delta'_1-p-1).$$
\end{pro}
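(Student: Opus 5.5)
The strategy is to reduce to the case of Theorem~\ref{min_gen}(1) with trivial ramified part: every such cover is an \'etale $H$-cover of an Artin--Schreier curve, so minimizing its genus amounts to minimizing the Artin--Schreier conductor subject to a dimension constraint.

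\textbf{Structure of an arbitrary cover.} Let $W\longrightarrow\PP^1$ be a $\Gamma=(\Z/l\Z)^n\rtimes_\theta(\Z/p\Z)$-Galois cover, \'etale over $\Aff^1$, and put $H=(\Z/l\Z)^n$. Then $V:=W/H\longrightarrow \PP^1$ is a connected $\Z/p\Z$-cover, \'etale over $\Aff^1$. By \S\ref{genus-u} it is given by $z^p-z=f$ with $f$ a polynomial of degree $\delta$, where $\gcd(\delta,p)=1$, and $g_V=(p-1)(\delta-1)/2$.

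Since $S_X=\{\infty\}=C_0^X$, we have $r_X=r_0=1$. Theorem~\ref{2lem} then gives $P_{\ram}(S_V,l)=0$, so $P_l(V\setminus S_V)=\P0(V)[l]$. Hence $W\longrightarrow V$ is \'etale, and by \cite[Theorem~3.5]{EPCh} $H$ embeds $G$-equivariantly into $\P0(V)[l]$. Riemann--Hurwitz (\S\ref{genus-tame}) then gives
$$g_W=1+l^n(g_V-1)=1+\tfrac{l^n}{2}\bigl((p-1)(\delta-1)-2\bigr)=1+\tfrac{l^n}{2}(p\delta-\delta-p-1).$$
So $g_W$ is increasing in $\delta$, and the problem reduces to finding the least admissible $\delta$.

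\textbf{Lower bound on $\delta$.} By Lemma~\ref{subcover_2}, $\P0(V)[l]^G=\P0(\PP^1)[l]=0$. By Lemma~\ref{genus_subcover}, $\P0(V)[l]=\ker\Phi_p(\sigma)$ has dimension $2g_V=(p-1)(\delta-1)$. An embedding of $H$ therefore forces $n\le (p-1)(\delta-1)$, that is, $\delta\ge \frac{n}{p-1}+1$. Since also $p\nmid\delta$, we get $\delta\ge\delta'_1$, which gives the lower bound on $g_W$.

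\textbf{Attaining the bound.} Choose a polynomial $f$ of degree $\delta'_1$ and let $V$ be the curve $z^p-z=f$. Then $\P0(V)[l]$ is a direct sum of nontrivial irreducibles $M_{1,t}$, each of dimension $d_1$, with total dimension $(p-1)(\delta'_1-1)\ge n$. Since $d_1\mid n$, we can pick a direct sum of $n/d_1$ of these irreducible summands; this is a $G$-submodule $H\subset\P0(V)[l]$ of dimension $n$, and it defines the action $\theta$. By Lemma~\ref{C} and \cite[Theorem~3.5]{EPCh}, $H$ gives a connected \'etale $H$-cover $W\longrightarrow V$ such that $W\longrightarrow\PP^1$ is $H\rtimes_\theta G$-Galois and \'etale over $\Aff^1$. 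Its genus equals the lower bound with $\delta=\delta'_1$.

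\textbf{Main obstacle.} The delicate point is the reduction in the first step. One must justify that $V$ is connected and Artin--Schreier with $p\nmid\delta$, which uses \cite[Theorem~1]{Gar} for $a=1$. One must also check that no ramified contribution can occur; this holds because $P_{\ram}(S_V,l)=0$ when $r_X=r_0=1$.
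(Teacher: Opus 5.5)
Your proposal is correct and follows essentially the same argument as the paper: since $S_V$ is a single point, $W\longrightarrow V$ must be \'etale, so $g_W=1+l^n(g_V-1)$, and one minimizes the Artin--Schreier degree $\delta$ subject to $n\le 2g_V=(p-1)(\delta-1)$ and $p\nmid\delta$. The paper phrases the \'etaleness as ``a $(\Z/l\Z)^n$-cover cannot be branched at exactly one point'', which is your observation $P_{\ram}(S_V,l)=0$, and your explicit choice of $n/d_1$ irreducible summands of $\P0(V)[l]$ realizing the bound spells out the existence the paper obtains from Proposition~\ref{n_char} and Lemma~\ref{genus_subcover}.
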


\begin{proof}
Let $\psi:V\longrightarrow \PP^1$ be a $\Z/p\Z$-Galois cover, \'etale over $\Aff^1=\Spec k[x]$. By Artin-Schreier theory, $k(V)$ is given by $k(V)=k(x)[y]/(y^p-y-f_1(x))$ for some polynomial $f_1$ of degree $\delta_1$ such that $p\nmid \delta_1$. Moreover, $V\longrightarrow \PP^1$ is totally ramified above $\infty$ and the genus $g_V$ of $V$ is $(p-1)(\delta_1-1)/2$ (see Subsection~\ref{genus-u}).

Let $\gamma_1\le {2g_V}/{d_1}$ and $n=\gamma_1d_1$. Then there exists a $((\Z/l\Z)^n\rtimes (\Z/p\Z))$-Galois cover $W\longrightarrow\PP^1$, \'etale over $\Aff^1$, dominating $V\longrightarrow X$, by Lemma~\ref{genus_subcover} and Proposition~\ref{n_char}.
Then $W\longrightarrow V$ is a $(\Z/l\Z)^n$-cover possibly branched at $\psi^{-1}(\infty)$. But $\psi^{-1}(\infty)$ is singleton. Since any $(\Z/l\Z)^n$-cover is either \'etale or branched at two or more points, hence $W\longrightarrow V$ is an \'etale cover. So by Riemann-Hurwitz formula (see Subsection~\ref{genus-tame}), its genus is
$$g_W=\frac{2+l^n((p-1)(\delta_1-1)-2)}{2}.$$

The minimum of $g_W$ is attained at the minimum value of $\delta_1$. Since $$n=\gamma_1d_1\le 2g_V=(p-1)(\delta_1-1),$$ 
we have $1+n/(p-1)\le \delta_1$. Hence the minimum value of $\delta_1$ is $\lceil 1+n/(p-1)\rceil'=\delta'_1$. Then the minimum value of $g_W$ is $1+l^n(p\delta'_1-\delta'_1-p-1)/2$.
\end{proof}

\begin{rmk}\label{rmk-p-cyclic}
Here, $a=1$ is fixed, but $n>0$ varies. Since the value of $\delta'_1$ is $\lceil 1+n/(p-1)\rceil$ (if not divisible by $p$) or $\lceil 1+n/(p-1)\rceil+1$, the minimum genus above is an increasing function on $n$. If $n$ also varies, the minimum genus of all $((\Z/l\Z)^n\rtimes (\Z/p\Z))$-Galois cover of $\PP^1$, \'etale away from $\infty$, for any positive integer $n$ and any homomorphism $\Z/p\Z\longrightarrow \Aut((\Z/l\Z)^n)$ is $1+l^{d_1}(p-3)/2$ if $p$ is odd. When $p=2$, then $d_1=1$ and the minimum genus is $1$. This recovers parts 1 and 2 of \cite[Theorems 4.2 and 4.3]{G}.
\end{rmk}

In Subsection~\ref{genus-u}, we discussed about the upper jumps $u_i, ~\ 1\le i\le a,$ of a $p^a$-cyclic cover $\psi:V\longrightarrow \PP^1$, \'etale over $\Aff^1$. Let $X_i\longrightarrow \PP^1$ be the $p^i$-cyclic quotient cover of $\psi$, with genus $g_i$, $0\le i\le a$. Here $g_0=0$, $g_a=g_V$ for $V=X_a$. Recall that the upper jumps satisfy Condition~\ref{uJump}.
Also, given any $(u_1, \ldots ,u_a)$ satisfying the Condition~\ref{uJump}, there are $p^a$-cyclic covers of $\PP^1$ \'etale over $\Aff^1$ whose $i$-th upper jump at $\infty$ is $u_i$, $1\le i\le a$ (see \cite[Theorem~8.31]{Jac}).

From Equation \eqref{G} the genus of $X_i$ is $g_i=[1-p^i+(p-1)\sum_{t=1}^ip^{t-1}u_t]/2$. 

\begin{thm}\label{p^a}
Let $a$ and $n$ be fixed integers such that $d_1$ divides $n$. Let 
$$\mathfrak{U}=\{\underline{\gamma}=(\gamma_1,\ldots,\gamma_a)\in\Z^a_{\ge 0}\mid n=\sum_{i=1}^a\gamma_id_i\}$$ and $$c_{\min}=\min_{\underline{\gamma}\in\mathfrak{U}}\{
\sum_{i=1}^ap^{i-1}u_i(\underline{\gamma})\},$$ where
\begin{itemize}
 \item $u_1(\underline{\gamma})$ is the minimum integer such that $u_1(\underline{\gamma})\ge \frac{\gamma_1d_1}{p-1}+1$ and $\gcd(u_1(\underline{\gamma}),p)=1$;
 \item For $i\ge 2$, $u_i(\underline{\gamma})$ is the minimum integer such that $$u_i(\underline{\gamma})\ge \max\left\{\frac{\gamma_id_i}{p^{i-1}(p-1)}+1, ~\ pu_{i-1}(\underline{\gamma})\right\}; \text{ and } u_i(\underline{\gamma})=pu_{i-1}(\underline{\gamma})\text{ or } p\nmid u_i(\underline{\gamma}).$$
\end{itemize}
 Then the minimum genus of all $((\Z/l\Z)^n\rtimes (\Z/p^a\Z))$-Galois cover of $\PP^1$, \'etale away from $\infty$, for a fixed $n$ divisible by $d_1$ and all homomorphisms $\Z/p^a\Z\longrightarrow \Aut((\Z/l\Z)^n)$ is
$$1+l^n\left(\frac{-1-p^a}{2}+\frac{(p-1)c_{\min}}{2}\right).$$
\end{thm}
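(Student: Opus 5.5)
The plan follows the pattern of Proposition~\ref{p-cyclic}. First observe that every $((\Z/l\Z)^n\rtimes\Z/p^a\Z)$-Galois cover $W\to\PP^1$, \'etale over $\Aff^1$, factors as $W\to V\to\PP^1$, where $V=W/H$ is a $\Z/p^a\Z$-cover \'etale over $\Aff^1$. Such a $V$ is connected, because $W$ is connected. Since $\infty$ is the only branch point, $\psi^{-1}(\infty)$ is a single point, so $S_V$ is a singleton. A $(\Z/l\Z)^n$-cover of $V$ branched at only one point must be \'etale: any nontrivial cyclic subcover corresponds to some $([L],D)$ with $D$ supported at one point, and $\deg D\equiv 0 \pmod l$ forces $D\equiv 0$. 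Hence $P_{\ram}(S_V,l)=0$ and $W\to V$ is \'etale. The Riemann--Hurwitz formula (Subsection~\ref{genus-tame}) then gives $g_W=1+l^n(g_V-1)$. Using Equation~\eqref{G} with $i=a$ we get
\[
g_W=1+l^n\left(\frac{-1-p^a}{2}+\frac{(p-1)\sum_{t=1}^a p^{t-1}u_t}{2}\right),
\]
where $u_1,\dots,u_a$ are the upper jumps of $V$ at $\infty$. The problem therefore reduces to minimizing $\sum_t p^{t-1}u_t$ over all admissible $V$ that are dominated by some such $W$.

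Next, characterize which $V$ are dominated. By Proposition~\ref{n_char} (here $n_0=2g_X+r_X-1=0$, so $\gamma_0=0$), such a $W$ exists for some action $\theta$ exactly when $n=\sum_s\gamma_sd_s$ with $\gamma_s d_s\le n_s$. By Corollary~\ref{dim-ker} or Lemma~\ref{genus_subcover}, $n_s=2g_s-2g_{s-1}$, and from \eqref{G} this equals $(p-1)p^{s-1}u_s-(p^s-p^{s-1})=p^{s-1}(p-1)(u_s-1)$. So the condition becomes
\[
u_s\ge \frac{\gamma_sd_s}{p^{s-1}(p-1)}+1 \quad\text{for each } s.
\]
Combined with Condition~\ref{uJump}, which is necessary, and with its converse (every tuple satisfying Condition~\ref{uJump} is realized, by \cite[Theorem~8.31]{Jac}), the set of achievable $g_W$ is exactly the set of values of the formula above over tuples $(u_s)$ satisfying Condition~\ref{uJump} together with the displayed lower bounds for some $\underline\gamma\in\mathfrak U$.

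Finally, fix $\underline\gamma\in\mathfrak U$ and show that the greedy tuple $u_i(\underline\gamma)$ defined in the statement is coordinatewise minimal among all admissible tuples for that $\underline\gamma$. Since the objective has positive coefficients, it will then be minimized by $u(\underline\gamma)$. I expect this step to be the main obstacle, since the constraints couple consecutive coordinates. I would argue by induction on $i$: if $u'$ is admissible then $u'_1\ge u_1(\underline\gamma)$, because $u_1(\underline\gamma)$ is the least integer prime to $p$ above the bound. Assuming $u'_{i-1}\ge u_{i-1}(\underline\gamma)$, we have $u'_i\ge pu'_{i-1}\ge pu_{i-1}(\underline\gamma)$ and $u'_i$ exceeds the $\gamma_i$-bound. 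Moreover $u'_i$ is either prime to $p$, or equal to $pu'_{i-1}$. In the latter case, if $u'_{i-1}>u_{i-1}(\underline\gamma)$ then $u'_i\ge p(u_{i-1}(\underline\gamma)+1)$, which exceeds every candidate value of $u_i(\underline\gamma)$ because an integer prime to $p$ occurs among any $p$ consecutive integers. In either case $u'_i\ge u_i(\underline\gamma)$. The greedy tuple itself satisfies Condition~\ref{uJump} by construction, so it is realized. Minimizing over $\underline\gamma\in\mathfrak U$ then yields $c_{\min}$ and the stated genus.
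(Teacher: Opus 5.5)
Your reduction is the same as the paper's, and it is sound. The map $W\to V$ is \'etale because $P_{\ram}(S_V,l)=0$ when $S_V$ is a single point, so $g_W=1+l^n(g_V-1)$. (That $\psi^{-1}(\infty)$ is one point uses that $\PP^1$ has no nontrivial connected \'etale covers, so the inertia at $\infty$ is all of $\Z/p^a\Z$; being the only branch point is not enough on its own.) Proposition~\ref{n_char} with $n_0=0$ and $n_s=p^{s-1}(p-1)(u_s-1)$ turns domination into the bounds $u_s\ge T_s(\underline\gamma):=\frac{\gamma_sd_s}{p^{s-1}(p-1)}+1$, and every tuple satisfying Condition~\ref{uJump} is realized. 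What this establishes is that the minimum genus is $1+l^n\left(\frac{-1-p^a}{2}+\frac{(p-1)c^*}{2}\right)$. Here $c^*$ is the minimum of $\sum_i p^{i-1}u_i$ over all tuples satisfying Condition~\ref{uJump} and $u_i\ge T_i(\underline\gamma)$ for some $\underline\gamma\in\mathfrak U$.

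\textbf{The gap.} The gap is your last step, the claim that the greedy tuple is coordinatewise minimal. Take the case $u'_i=pu'_{i-1}$ with $u'_{i-1}>u_{i-1}(\underline\gamma)$. You assert that $p(u_{i-1}(\underline\gamma)+1)$ exceeds every candidate for $u_i(\underline\gamma)$, but $u_i(\underline\gamma)$ is also forced above $T_i$. Suppose $u'_i-1<T_i\le u'_i$. Then the candidate $pu_{i-1}(\underline\gamma)$ lies below $T_i$, and $u'_i$ is divisible by $p$, so $u_i(\underline\gamma)=u'_i+1>u'_i$. Thus raising $u_{i-1}$ can lower $u_i$, and the weighted sum can drop.

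\textbf{A counterexample.} This actually happens. Take $p=3$, $l=2$ (so $d_1=2$, $d_2=6$), $a=2$, $n=30$, so that $\mathfrak U=\{(3k,5-k):0\le k\le 5\}$. For $k\ge1$ one has $u_1\ge 3k+1\ge 4$ and $u_2\ge 3u_1\ge 12$, so the sum is at least $40$. For $\underline\gamma=(0,5)$ one has $T_1=1$ and $T_2=6$, and the greedy tuple is $(1,7)$ with sum $22$. Hence the statement's $c_{\min}$ is $22$, and its claimed minimum genus is $1+17\cdot 2^{30}$. But $(2,6)$ satisfies Condition~\ref{uJump} and both bounds, with sum $20$. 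It is exactly the cover $V$ of Example~\ref{ex1} ($g_V=16$, $n_2=30$), which is dominated by a $(\Z/2\Z)^{30}\rtimes\Z/9\Z$-cover of genus $1+15\cdot 2^{30}$.

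So this step cannot be repaired: the statement as written is false. The paper's proof has the same hole, since its assertion that $u_i(\underline\gamma)$ is increasing in $u_{i-1}(\underline\gamma)$ fails here: $u_1=1$ forces $u_2=7$, while $u_1=2$ permits $u_2=6$. A correct recipe must also allow raising $u_{i-1}$ so that $u_i=pu_{i-1}$. For $a=2$ one checks that $c^*(\underline\gamma)=\min\{u_1(\underline\gamma)+pu_2(\underline\gamma),\,(1+p^2)m\}$, where $m$ is the least integer prime to $p$ with $m\ge\max\{T_1,T_2/p\}$.
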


\begin{proof}
By Lemma~\ref{quasi-p} and Abhyankar's Conjecture (Raynaud's theorem \cite{R}) on the affine line, there exists a $((\Z/l\Z)^n\rtimes_\theta (\Z/p^a\Z))$-Galois cover $W\longrightarrow \PP^1$, \'etale away from $\infty$, for some $\theta:\Z/p^a\Z\longrightarrow \Aut((\Z/l\Z)^n)$. If $V\longrightarrow \PP^1$ is the corresponding $p^a$-cyclic cover dominated by $W$, then by  Proposition~\ref{n_char} and Lemma~\ref{genus_subcover}, $n$ can be expressed as $n=\sum_{i=1}^a \gamma_id_i$ for some $\underline{\gamma}\in \mathfrak{U}$ such that $\gamma_i\le 2(g_i-g_{i-1})/{d_i}$ where $g_0=0$, $g_a=g_V$.

Let $(u_1, \ldots, u_a)$ be the upper jumps at $\infty$. Then it satisfies Condition~\ref{uJump}. Then for $i=1$, we see that $\gamma_1d_1\le 2g_1=1-p+(p-1)u_1$ (by Equation \eqref{G}), $u_1\ge 1$ and $\gcd(u_1,p)\neq 1$. Thus
$$u_1\ge \frac{\gamma_1d_1}{p-1}+1 \text{ and } \gcd(u_1,p)\neq 1.$$
Hence $u_1\ge u_1(\underline{\gamma})$.
Similarly, for $i\ge 2$, $u_i\ge\max\{\gamma_id_i/(p^{i-1}(p-1))+1,pu_{i-1}\}$, $u_i-pu_{i-1}$ is zero or a positive integer coprime to $p$. Note that $g_W=1+l^n(g_V-1)$ (see Subsection~\ref{genus-tame}), and $g_V=(1-p^i+(p-1)c)/{2}$ for $c=\sum_{i=1}^au_ip^{i-1}$. Therefore, to minimize $g_W$, we need to minimize $g_V$. This happens when $c$ reaches the minimum. But $c$ is an increasing function of each $u_i$. Note that each $u_i(\underline{\gamma})$ is also an increasing function of $u_{i-1}(\underline{\gamma})$ for $i\ge 2$. Indeed, if $ \gamma_id_i/(p^{i-1}(p-1))+1\le pu_{i-1}(\underline{\gamma})$ and $u_{i-1}(\underline{\gamma})$ increases by $1$, then $u_i(\underline{\gamma})$ will increase by $p$. So, for each weighted partition $\underline{\gamma}$ of $n$, $c$ attains the minimum value at $(u_1, \ldots, u_a)=(u_1(\underline{\gamma}), \ldots, u_a(\underline{\gamma}))$.
\end{proof}

\begin{co}\label{p2}
Let $a=2$, $d_1$ divides $n$ and $n/d_1=R+S\times (d_2/d_1)$ for non-negative integers $R$ and $S$, $0\le R<d_2/d_1$. The minimum genus of all $((\Z/l\Z)^n\rtimes (\Z/p^2\Z))$-Galois cover of $\PP^1$, \'etale away from $\infty$, for a fixed $n$ divisible by $d_1$ and all group actions $\Z/p^2\Z\longrightarrow \Aut((\Z/l\Z)^n)$ is
$$1+l^n\left(\frac{-1-p^2+(p-1)\min\{u_1(j)+pu_2(j): 0\le j\le S\}}{2}\right),$$ where $u_1(j)$ is the minimum integer such that $u_1(j)\ge(Rd_1+jd_2)/(p-1)+1$ and $\gcd(u_1(j),p)=1$, $u_2(j)$ is the minimum integer such that $u_2(j)\ge\max\{\frac{Sd_2-jd_2}{p(p-1)}+1,pu_1(j)\}$, and either $u_2(j)=pu_1(j)$ or $p\nmid u_2(j)$.
\end{co}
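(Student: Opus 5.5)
This is a specialization of Theorem~\ref{p^a} to $a=2$. I would reduce the minimization over $\mathfrak{U}$ to a one-parameter family indexed by $j$.

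First I rewrite $\mathfrak{U}$. For $a=2$, the elements of $\mathfrak{U}$ are the pairs $(\gamma_1,\gamma_2)\in\Z_{\ge0}^2$ with $\gamma_1d_1+\gamma_2d_2=n$. Dividing by $d_1$ (recall $d_1\mid d_2$, since $l^{d_2}\equiv1$ mod $p^2$ implies the same mod $p$), this becomes $\gamma_1+\gamma_2(d_2/d_1)=n/d_1=R+S(d_2/d_1)$. Hence $\gamma_1\equiv R \pmod{d_2/d_1}$, so $\gamma_1=R+j(d_2/d_1)$ with $j\ge0$. Then $\gamma_2=S-j$, and $\gamma_2\ge0$ forces $0\le j\le S$. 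Conversely, every $j\in\{0,\dots,S\}$ gives an element of $\mathfrak{U}$. So $\mathfrak{U}$ is in bijection with $\{0,\dots,S\}$, and $\gamma_1d_1=Rd_1+jd_2$ while $\gamma_2d_2=Sd_2-jd_2$.

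Next I substitute into the definitions of $u_1(\underline\gamma)$ and $u_2(\underline\gamma)$ in Theorem~\ref{p^a}. The condition $u_1\ge\gamma_1d_1/(p-1)+1$ with $p\nmid u_1$ becomes exactly the defining condition of $u_1(j)$. Likewise, the condition $u_2\ge\max\{\gamma_2d_2/(p(p-1))+1,\,pu_1\}$, with $u_2=pu_1$ or $p\nmid u_2$, becomes the defining condition of $u_2(j)$. Since $\sum_{i=1}^2p^{i-1}u_i=u_1+pu_2$, we get $c_{\min}=\min_{0\le j\le S}\{u_1(j)+pu_2(j)\}$.

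Finally, putting $a=2$ into the genus formula $1+l^n\bigl((-1-p^a)/2+(p-1)c_{\min}/2\bigr)$ of Theorem~\ref{p^a} gives the stated expression. The only point needing care is the bijection between $\mathfrak{U}$ and $\{0,\dots,S\}$, in particular the congruence step that uses $0\le R<d_2/d_1$ and $d_1\mid d_2$. The rest is direct substitution, and I expect no real obstacle.
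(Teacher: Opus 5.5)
Your reduction is the paper's proof. The paper writes $n=(R+jd_2/d_1)d_1+(S-j)d_2$, identifies $\mathfrak{U}$ with $\{0,\dots,S\}$ and substitutes into Theorem~\ref{p^a}; your congruence argument (using $d_1\mid d_2$ and $0\le R<d_2/d_1$) just justifies that identification, which the paper leaves implicit. The genuine gap is in what you treat as a black box: Theorem~\ref{p^a}, and with it this statement, is false in general. Its proof takes the least admissible $u_1$, claiming that the least admissible $u_2$ increases with $u_1$. But that is argued only when $pu_1\ge L_2$, where $L_2:=\frac{(S-j)d_2}{p(p-1)}+1$. Given $u_1$, the least admissible $u_2$ is $pu_1$ if $pu_1\ge L_2$, and otherwise the least integer $\ge L_2$ prime to $p$. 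So when $\lceil L_2\rceil$ is a multiple of $p$, a larger admissible $u_1$ with $pu_1=\lceil L_2\rceil$ permits $u_2=\lceil L_2\rceil$ instead of $\lceil L_2\rceil+1$, which can lower $u_1+pu_2$.

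Concretely, let $p=3$, $l=2$ (so $d_1=2$, $d_2=6$) and $n=30$, so $R=0$, $S=5$.
\begin{itemize}
\item For $j=0$: $u_1(0)=1$, and $u_2(0)\ge\max\{6,3\}$ cannot equal $3u_1(0)$, so $u_2(0)=7$ and the value is $22$.
\item For $j=1,\dots,5$ the values are $40,70,100,130,160$.
\end{itemize}
The formula therefore predicts $1+2^{30}\cdot 17$. But $(u_1,u_2)=(2,6)$ satisfies Condition~\ref{uJump} and $u_2\ge L_2=6$, with $u_1+3u_2=20$. This is the cover $V$ of Example~\ref{ex1}, where $g_1=1$, $g_V=16$ and $\ker\Phi_9(\sigma)\subset\mathrm{Pic}^0(V)[2]$ has dimension $30$. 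Taking $H$ to be this $G$-stable kernel yields an \'etale $(\Z/2\Z)^{30}$-cover $W\to V$. It is Galois over $\PP^1$ with group $(\Z/2\Z)^{30}\rtimes\Z/9\Z$, \'etale over $\Aff^1$, and has genus $1+2^{30}\cdot 15$ (Example~\ref{ex1} indeed lists $n=30$). So the specialization, yours and the paper's alike, only shows that the displayed number is attained: each $(u_1(j),u_2(j))$ satisfies Condition~\ref{uJump} and is realized by a cover. It is therefore an upper bound, not the minimum. For the true minimum you must, for each $j$, also compare with $(1+p^2)u_1^*$, where $u_1^*$ is the least integer prime to $p$ with $u_1^*\ge u_1(j)$ and $pu_1^*\ge L_2$.
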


\begin{proof}
Here $a=2$ and $n=Rd_1+Sd_2=(R+jd_2/d_1)d_1+(S-j)d_2$. Hence we have $\mathfrak{U}=\{(R+jd_2/d_1,S-j)\mid 0\le j\le S\}$. We replace $u_i((R+jd_2/d_1,S-j))$ in Proposition~\ref{p^a} by $u_i(j)$, $i=1,2$. Now the result follows from Proposition~\ref{p^a}.
\end{proof}

Let $a=2$ be fixed, but $n>0$ be varying. We now proceed to prove Theorem~\ref{p^2-Galois}, which provides a more precise result concerning the minimum genus of all $((\Z/l\Z)^n\rtimes(\Z/p^2\Z))$-Galois covers of $\PP^1$, \'etale over $\Aff^1$ (where $n$ and the group action are varying).

\begin{proof}[Proof of Theorem~\ref{p^2-Galois}]
 Again note that $((\Z/l\Z)^n\rtimes(\Z/p^a\Z))$-cover of $\PP^1$, \'etale over $\Aff^1$, exist iff $d_1$ divides $n$. We use Corollary~\ref{p2} with its notation.

Let us first assume that $n=d_1=d_2$. Then $R=0$ and $S=1$. Since $u_1(0)\ge 1$ and $\gcd(u_1(0),p)=1$, $u_1(0)=1$. Since $d_2=d_1 \le p-1$ and $u_2(0)$ is the smallest integer such that $u_2(0)\ge \max\{d_2/(p(p-1))+1,p\}=p$, $u_2(0)=p$. Then $u_1(0)+pu_2(0)=1+p^2$.

If $p=2$, $u_1(1)=3$, $u_2(1)\ge\max\{(0\times d_2)/(2(2-1))+1,3\times 2\}$. So $u_2(1)$ must be $6$ and hence $u_1(1)+pu_2(1)=15>5=p^2+1$ for $p=2$. Similarly, $u_1(1)+pu_2(1)=4+4p^2 >p^2+1$ for $p\neq 2$.

When $n>d_1=d_2$, $R=0$, $S=n/d_1\ge 2$ and $0\le j\le S$. Then $u_1(0)=1$, $u_2(0)\ge p$; for $j>0$, $u_1(j)\ge 2$, $u_2(j)\ge 2p$. So $u_1(j)+pu_2(j)\ge 1+p^2$. Hence, when $d_1=d_2$, the minimum genus is achieved when $n=d_1=d_2$ and in this case $c$ in the statement of Theorem~\ref{p^2-Galois} is $\min\{u_1(j)+pu_2(j): 0\le j\le 1\}=1+p^2$ by the above corollary.

Now consider the case $d_1<d_2$. If $n=d_1$ then $R=1$ and $S=0$. Since $j=0$, $u_1(0)\ge d_1/(p-1)+1$ and $\gcd(u_1(0),p)=1$, $u_1(0)$ is $3$ (resp. $2$) if $p=2$ (resp. $p \neq 2$). Similarly, $u_2(0)$ is $6$ (resp. $2p$) if $p=2$ (resp. $p\neq 2$). Then $u_1(0)+pu_2(0)$ is $15$ (resp. $2+2p^2$) if $p=2$ (resp. $p\neq 2$). Hence, the minimum genus when $n=d_1$ is obtained by putting $c=15$ if $p=2$ and $c=2+2p^2$ if $p$ is an odd prime.

When $d_1<n<d_2$ then $R=n/d_1\ge 2$ and $S=0$. Then $u_1(0)+pu_2(0)$ is same as $n=d_1<d_2$ case, but because of the $l^n$ factor, the genus will be bigger than $n=d_1<d_2$ case.

When $n\ge d_2>d_1$ then $S\ge 1$. Then similarly, we see that the genus in this case is bigger than the minimum genus for the $n=d_1<d_2$ case.
\end{proof}

\begin{ex}
Let $p=3$, $l=2$, $a=2$ and $n=14$. We will show that there exist $((\Z/2\Z)^{14}\rtimes (\Z/9\Z))$-Galois covers of $\PP^1$, \'etale over $\Aff^1$, for some $\Z/9\Z\rightarrow \Aut((\Z/2\Z)^{14})$ and the minimum of genera of all such covers (where the group action varies) is  $245761$.

Here $d_1=2$, $d_2=6$, $n=14$. By Lemma~\ref{quasi-p} and Abhyankar's Conjecture on the Affine line, such covers exist. Note that $14/2=1+2\times(6/2)$ and so $R=1$, $S=2$ and $j=0,1,2$. 
\begin{center}
\begin{tabular}{ c|c|c|c } 

 $j$ & $u_1(j)$ & $u_2(j)$ & $u_1(j)+3u_2(j)$ \\ \hline
 0 & 2 & 6 & 20 \\ 
 1 & 5 & 15 & 47 \\  
 2 & 8 & 24 & 80 \\ \end{tabular}
\end{center}
Since the minimum of $\delta_1(j)+3\delta_2(j)$ is $24$, by the above corollary, the required minimum genus is $$1+2^{14}\left(\frac{-1-3^2+(3-1)\times 20}{2}\right) =1+2^{13}\times 30=245761.$$

\end{ex}

\end{document}